\declaretheoremstyle[
  bodyfont=\normalfont\itshape,
  headformat=\NAME\ \NUMBER\NOTE,
]{myplain}
\declaretheoremstyle[
  headformat=\NAME\ \NUMBER\NOTE,
]{mydefinition}
\newcommand{\envqed}{{\lower-0.3ex\hbox{$\triangleleft$}}}
\declaretheorem[style=myplain,numberwithin=section]{theorem}
\declaretheorem[style=myplain,numberlike=theorem]{lemma}
\declaretheorem[style=myplain,numberlike=theorem]{corollary}
\declaretheorem[style=mydefinition,numberlike=theorem,qed=\envqed]{definition}
\declaretheorem[style=mydefinition,numberlike=theorem,qed=\envqed]{remark}
\let\epsilon\varepsilon
\let\phi\varphi
\let\rho\varrho
\newcommand{\cf}[0]{{cf.\@}\xspace}
\newcommand{\eg}[0]{{e.g.\@}\xspace}
\newcommand{\ie}[0]{{i.e.\@}\xspace}
\newcommand{\etal}[0]{{et al.\@}\xspace}
\newcommand{\trixi}{\texttt{Trixi.jl}\xspace}
\newcommand{\e}{\mathrm{e}}
\newcommand{\R}{\mathbb{R}}
\newcommand{\const}{\mathrm{const}}
\newcommand{\dx}{\Delta x}
\newcommand{\fnum}{f^{\mathrm{num}}}
\newsavebox{\DelimiterBox}
\newlength{\DelimiterHeight}
\newlength{\DelimiterDepth}
\newsavebox{\ArgumentBox}
\newlength{\ArgumentHeight}
\newlength{\ArgumentDepth}
\newlength{\ResizedDelimiterHeight}
\newlength{\ResizedDelimiterDepth}
\newcommand{\encloseby}[3]{%
  \savebox{\ArgumentBox}{$\displaystyle #1$}%
  \settoheight{\ArgumentHeight}{\usebox{\ArgumentBox}}%
  \settodepth{\ArgumentDepth}{\usebox{\ArgumentBox}}%
  \savebox{\DelimiterBox}{#2}%
  \settoheight{\DelimiterHeight}{\usebox{\DelimiterBox}}%
  \settodepth{\DelimiterDepth}{\usebox{\DelimiterBox}}%
  \setlength{\ResizedDelimiterHeight}{%
    \maxof{1.2\ArgumentHeight}{\DelimiterHeight}%
  }
  \setlength{\ResizedDelimiterDepth}{%
    \maxof{1.2\ArgumentDepth}{\DelimiterDepth}%
  }
  \raisebox{-\ResizedDelimiterDepth}{%
    \resizebox{\width}{\ResizedDelimiterHeight+\ResizedDelimiterDepth}{%
      \raisebox{\DelimiterDepth}{#2}%
    }%
  }
  #1
  \raisebox{-\ResizedDelimiterDepth}{%
    \resizebox{\width}{\ResizedDelimiterHeight+\ResizedDelimiterDepth}{%
      \raisebox{\DelimiterDepth}{#3}%
    }%
  }
}
  \newcommand{\mean}[1]{\encloseby{#1}{$\{\mkern-5mu\{$}{$\}\mkern-5mu\}$}}
  \newcommand{\prodmean}[1]{\encloseby{#1}{$(\mkern-4mu($}{$)\mkern-4mu)$}}
  \newcommand{\jump}[1]{\encloseby{#1}{$[\mkern-4mu[$}{$]\mkern-4mu]$}}
  \newcommand{\mean}[1]{\encloseby{#1}{$\{\mkern-6mu\{$}{$\}\mkern-6mu\}$}}
  \newcommand{\prodmean}[1]{\encloseby{#1}{$(\mkern-3mu($}{$)\mkern-3mu)$}}
  \newcommand{\jump}[1]{\encloseby{#1}{$[\mkern-3mu[$}{$]\mkern-3mu]$}}
\newcommand{\logmean}[1]{\mean{#1}_\mathrm{log}}
\newcommand{\orcid}[1]{ORCID:~\href{https://orcid.org/#1}{#1}}
\newenvironment{keywords}{\par\textbf{Key words.}}{\par}
\newenvironment{AMS}{\par\textbf{AMS subject classification.}}{\par}
\title{Preventing pressure oscillations does not fix local linear stability issues of entropy-based split-form high-order schemes}
\author[1]{Hendrik Ranocha\thanks{\orcid{0000-0002-3456-2277}}}
\author[2]{Gregor J. Gassner\thanks{\orcid{0000-0002-1752-1158}}}
\affil[1]{%
King Abdullah University of Science and Technology (KAUST),
Computer Electrical and Mathematical Science and Engineering Division (CEMSE),
Thuwal, 23955-6900, Saudi Arabia}
\affil[2]{%
Department of Mathematics and Computer Science, Center for Data and Simulation Science, University of Cologne, Germany}
\date{April 13, 2021} 
\begin{document}

\maketitle

\begin{abstract}
  Recently, it was discovered that the entropy-conserving/dissipative high-order split-form discontinuous Galerkin discretizations have robustness issues when trying to solve the simple density wave propagation example for the compressible Euler equations. The issue is related to missing local linear stability, i.e. the stability of the discretization towards perturbations added to a stable base flow. This is strongly related to an anti-diffusion mechanism, that is inherent in entropy-conserving two-point fluxes, which are a key ingredient for the high-order discontinuous Galerkin extension. In this paper, we investigate if pressure equilibrium preservation is a remedy to these recently found local linear stability issues of entropy-conservative/dissipative high-order split-form discontinuous Galerkin methods for the compressible Euler equations. Pressure equilibrium preservation describes the property of a discretization to keep pressure and velocity constant for pure density wave propagation. We present the full theoretical derivation, analysis, and show corresponding numerical results to underline our findings. In addition, we characterize numerical fluxes for the Euler equations that are entropy-conservative, kinetic-energy-preserving, pressure-equilibrium-preserving, and have a density flux that does not depend on the pressure. The source code to reproduce all numerical experiments presented in this article is available online (DOI: \texorpdfstring{\href{https://doi.org/10.5281/zenodo.4054366}{10.5281/zenodo.4054366}}{10.5281/zenodo.4054366}).

\end{abstract}

\begin{keywords}
  entropy conservation,
  kinetic energy preservation,
  pressure equilibrium preservation,
  compressible Euler equations,
  local linear stability,
  summation-by-parts
\end{keywords}

\begin{AMS}
  65M12,  
  65M70,  
  65M06,  
  65M60,  
  35Q35   
\end{AMS}

\section{Introduction}
\label{sec:introduction}

In recent years, discontinuous Galerkin (DG) spectral collocation methods with
summation-by-parts (SBP) property have gained a lot of traction in the high-order
community \cite{sjogreen2010skew,sjogreen2017skew,sjogreen2018high,chan2018discretely,chan2019efficient,parsani2015entropyInterfaces,parsani2015entropyWall,flad2017use},
due to the possibility to construct entropy-conservative/dissipative
\cite{tadmor1987numerical,tadmor2003entropy,lefloch2002fully,fisher2013high,ranocha2018comparison,chen2017entropy}
and/or kinetic-energy-preserving
\cite{jameson2008formulation,ranocha2018thesis,ranocha2020entropy,kuya2018kinetic}
discretizations. Such discretizations are currently successful, as they provide
strongly increased robustness for the approximation of highly non-linear problems
\cite{gassner2016split,rojas2021robustness,klose2020assessing},
in some cases even outperform DG discretization with polynomial de-aliasing
\cite{winters2018comparative}.
A key building block in these novel high-order collocation discretizations is a
special two-point flux formulation of the volume terms introduced by
LeFloch, Mercier, and Rohde for central finite differences in periodic domains
\cite{lefloch2002fully}, Fisher \etal for SBP finite differences in
bounded domains \cite{fisher2013high}, and by Carpenter \etal
for discontinuous spectral collocation schemes
\cite{carpenter2014entropy,carpenter2016towards}.

Unsurprisingly, the choice of (symmetric) two-point flux function used in the novel
volume term formulation is a key ingredient and determines
the properties of the resulting high-order discretization. It is a somewhat
surprising result that properties of the two-point fluxes used in simple
low-order finite volume formulations directly translate to the high-order
volume integral terms in this formulation. When using an entropy-conserving
two-point finite volume flux, the corresponding two-point flux volume integral
term of the DG scheme is entropy-conserving as well
\cite{lefloch2002fully,fisher2013high}.
The same is for instance true for kinetic energy preservation
\cite{gassner2016split,ranocha2018thesis}; as we will show in this paper,
it also holds for pressure equilibrium preservation.
We note that the simple arithmetic mean two-point flux function recovers
exactly the original nodal DG operator, while other choices of two-point flux
functions may result in non-linear split-form DG operators, even for linear
advection problems.

Unfortunately, it was recently discovered that the novel
entropy-conserving/dissipative (and many other split-form) DG schemes can
have stability issues \cite{gassner2020stability}.
While the DG discretization is equipped with a provably discrete entropy
inequality, it turns out that the schemes might struggle to retain
\emph{local linear stability}, \ie the linear stability of the non-linear
operator when linearized around a base-flow. Investigations of the spectrum
of the linearized high-order operators revealed modes with spurious exponential
growth, attributed to anti-diffusion of entropy-conserving two-point fluxes.
A particular striking example is given in \cite{gassner2020stability} for
the compressible Euler equations with a simple density wave
\begin{equation}
\label{eq:density_wave}
\begin{pmatrix}
    \rho(x,t) \\
    v(x,t) \\
    p(x,t)
  \end{pmatrix}
=
\begin{pmatrix}
    1 + 0.98 \sin(2\,\pi\,(x-v\,t))\\
    0.1\\
    20
  \end{pmatrix},
\end{equation}
where the density $\rho$ is variable, but the velocity $v$ and pressure $p$ are constant. Such a density wave \eqref{eq:density_wave} is a simple and
smooth exact solution to the compressible Euler equations
with perfect gas law, when equipped with appropriate (\eg periodic) boundary
conditions. Surprisingly, the entropy-conserving/dissipative DG schemes and
other split-form variants fundamentally struggle for this simple problem. It turns out that the linearized spectrum
shows spurious modes with exponential growth, that may cause fatal crashing of the simulation.

In another recent paper, Shima \etal \cite{shima2021preventing}
investigated the capability of their kinetic-energy-preserving two-point flux
to retain what they call \emph{pressure equilibrium}. Consider the
compressible Euler equations with an ideal gas law,
\begin{equation}
\label{eq:euler}
  \partial_t
  \underbrace{\begin{pmatrix}
    \rho \\
    \rho v \\
    \rho e
  \end{pmatrix}}_{= u}
  + \partial_x
  \underbrace{\begin{pmatrix}
    \rho v \\
    \rho v^2 + p \\
    (\rho e + p) v
  \end{pmatrix}}_{= f(u)}
  = 0,
\end{equation}
where $\rho e$ is the total energy, $\rho \epsilon$ the internal energy,
$\rho v^2 / 2$ the kinetic energy, and
\begin{equation}
  p
  =
  (\gamma - 1) \rho \epsilon,
  \qquad
  \rho \epsilon
  =
  \rho e - \frac{1}{2} \rho v^2.
\end{equation}
Pressure equilibrium is precisely the case, when velocity $v$ and pressure $p$
are both constant, \eg the density-wave \eqref{eq:density_wave}. We get from
the evolution of the compressible Euler equations the evolution equations of
the velocity
\begin{equation}
  \rho \partial_t v
  =
  \partial_t (\rho v) - v \partial_t \rho
  =
  - \partial_x (\rho\,v^2) - \partial_x p+ v\,\partial_x (\rho\,v),
\end{equation}
and of the pressure
\begin{equation}
\begin{aligned}
  \frac{1}{\gamma - 1} \partial_t p
  &=
  \partial_t (\rho e) - \frac{1}{2} \partial_t (\rho v^2)
  =
  - \partial_x ( (\rho e + p) v)
  - \frac{1}{2} v \partial_t (\rho v)
  - \frac{1}{2} v \rho \partial_t v
  \\
  &=
  - \frac{\gamma}{\gamma - 1} \partial_x (p v)
  - \frac{1}{2} \partial_x (\rho v^3)
  + \frac{1}{2} v \partial_x (\rho v^2 + p)
  - \frac{1}{2} v \rho \partial_t v.
\end{aligned}
\end{equation}
It follows that for constant velocity and pressure, the time derivatives
$\partial_t v = 0$ and $\partial_t p =0$, hence the coined term
\emph{pressure equilibrium}. Shima \etal \cite{shima2021preventing}
found exponential spurious growth for a similar density-wave test case when
using their kinetic-energy-preserving two-point flux \cite{kuya2018kinetic}.
When they modified the two-point flux to discretely preserve pressure
equilibrium, they could demonstrate numerically that the novel scheme robustly
solves the density-wave, even for very long simulation times.

In summary, the starting point of this paper are the works \cite{gassner2020stability,shima2021preventing} and we view the current work as a direct continuation of the analysis and discussion presented therein. This brings us directly to the research questions we are adressing in the current paper:
\begin{enumerate}[label=(RQ\arabic*), labelwidth=\widthof{\ref{RQ3}}, leftmargin=!]
  \item \label{RQ1}
  Is it possible to construct two-point flux functions that are not only
  kinetic-energy-preserving and pressure-equilibrium-preserving as the one
  proposed by Shima \etal \cite{shima2021preventing}, but also entropy-conserving
  (EC) according to Tadmor's condition \cite{tadmor1987numerical,tadmor2003entropy}?

  \item \label{RQ2}
  Is pressure equilibrium preservation a remedy for the local linear stability
  issues of the entropy-conserving/dissipative DG framework reported in
  \cite{gassner2020stability}?

  \item \label{RQ3}
  Are there entropies, such that the EC two-point fluxes and corresponding EC volume integral terms are
  locally linearly stable?
\end{enumerate}

The remainder of the paper is organized as follows: in the next section,
Section~\ref{sec:fluxes}, we investigate research question \ref{RQ1} and discuss the
construction and existence of entropy-conserving (EC), kinetic-energy-preserving
(KEP), and pressure-equilibrium-preserving (PEP) two-point flux functions.
In Section~\ref{sec:stability}, we investigate research questions \ref{RQ2} \& \ref{RQ3}
and discuss the impact of pressure equilibrium preservation on local linear
stability. As a by-product, we show that the PEP property of the two-point flux
function carries over to the high-order split-form DG scheme in the
Appendix~\ref{sec:appendix1}. In the final Section~\ref{sec:summary},
we summarize our results and collect the answers to the research questions.

\section{On the construction of EC, KEP, and PEP two-point fluxes}
\label{sec:fluxes}

\subsection{Structure preservation properties}
\label{sec:structure_preservation}

The first goal of this subsection is to collect and define the properties of
the compressible Euler equations we want to preserve with our discretization.
For the definition of two-point fluxes, it suffices to concentrate on
semi-discrete finite volume methods of the form
\begin{equation}
\label{eq:fv}
  \partial_t u_i
  + \frac{1}{\dx} \bigl(
    \underbrace{\fnum(u_{i+1}, u_{i})}_{= \fnum_+}
    - \underbrace{\fnum(u_{i}, u_{i-1})}_{= \fnum_-}
  \bigr)
  = 0.
\end{equation}
In what follows, we drop the subscript $+$ for the numerical flux function for convenience and assume an interface at location $i$ and $i+1$ if not stated otherwise.

\begin{definition}[Entropy-conservation \cite{tadmor1987numerical,tadmor2003entropy}]
  A numerical flux $\fnum$ and the corresponding finite volume method is
  EC if
  \begin{equation}
  \label{eq:ec}
    \jump{w} \cdot \fnum - \jump{\psi} = 0,
  \end{equation}
  where $w = U'$ are the entropy variables, $\psi$ is the flux potential,
  and $\jump{w} := w_{i+1} - w_i$ denotes the jump operator.
\end{definition}
Unless stated otherwise, we will use the entropy
\begin{equation}
\label{eq:U}
  U = \frac{- \rho\,s}{\gamma - 1},
  \qquad
  s = \log \frac{p}{\rho^\gamma},
\end{equation}
of the compressible Euler equations \eqref{eq:euler}, with associated entropy
variables
\begin{equation}
\label{eq:w}
  w = \left(
    \frac{\gamma}{\gamma - 1} - \frac{\log \nicefrac{p}{\rho^\gamma}}{\gamma - 1} - \frac{\rho v^2}{2 p},
    \frac{\rho v}{p},
    - \frac{\rho}{p}
  \right),
\end{equation}
and flux potential $\psi = \rho v$.

\begin{definition}[Kinetic energy preservation \cite{jameson2008formulation,ranocha2018thesis,ranocha2020entropy,kuya2018kinetic}]
  A numerical flux $\fnum = (\fnum_\rho, \fnum_{\rho v}, \fnum_{\rho e})$ and
  the corresponding finite volume method is KEP if
  \begin{equation}
  \label{eq:kep}
    \fnum_{\rho v} = \mean{v} \fnum_{\rho} + \mean{p},
  \end{equation}
  where $ \mean{p}:= (p_i + p_{i+1})/2$ denotes the arithmetic mean.
\end{definition}

\begin{definition}[Pressure equilibrium preservation]
  A numerical flux $\fnum = (\fnum_\rho, \fnum_{\rho v}, \fnum_{\rho e})$ and
  the corresponding finite volume method is PEP if
  \begin{equation}
  \label{eq:pep}
  \begin{aligned}
    \fnum_{\rho v}
    &=
    v \fnum_{\rho} + \const(p, v),
    \\
    \fnum_{\rho e}
    &=
    \frac{1}{2} v^2 \fnum_\rho + \const(p, v),
  \end{aligned}
  \end{equation}
  whenever the velocity $v$ and the pressure $p$ are constant throughout the
  domain.
\end{definition}

We motivate our definition of PEP fluxes with the following 
\begin{lemma}
\label{lem:pep-semidiscrete}
  Pressure equilibrium, \ie $p \equiv \const$, $v \equiv \const$, is preserved
  by \eqref{eq:fv} if and only if $\fnum$ is PEP.
\end{lemma}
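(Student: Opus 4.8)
The plan is to read the lemma's notion of ``preserved'' in the usual way: if the data satisfies $v_i\equiv v=\const$ and $p_i\equiv p=\const$ (the densities $\rho_i>0$ being otherwise arbitrary), then $\partial_t v_i=0$ and $\partial_t p_i=0$ for every $i$; by uniqueness of solutions of the smooth ODE \eqref{eq:fv} this is the same as saying that $v$ and $p$ stay globally constant for all time. So I would first reduce the lemma to these two pointwise identities and then establish them by transcribing the continuous computations of the introduction to the grid. Writing $\fnum_{\bullet,+}:=\fnum_\bullet(u_{i+1},u_i)$ and $\fnum_{\bullet,-}:=\fnum_\bullet(u_i,u_{i-1})$ for $\bullet\in\{\rho,\rho v,\rho e\}$, equation \eqref{eq:fv} together with $v_i=v$ gives
\begin{equation}
\label{eq:plan-v}
  \rho_i\,\partial_t v_i
  = \partial_t(\rho v)_i - v\,\partial_t\rho_i
  = -\frac{1}{\dx}\Bigl[\bigl(\fnum_{\rho v,+}-v\,\fnum_{\rho,+}\bigr)-\bigl(\fnum_{\rho v,-}-v\,\fnum_{\rho,-}\bigr)\Bigr],
\end{equation}
and, using $\rho\epsilon=\rho e-\tfrac12\rho v^2$ together with $\partial_t\bigl(\tfrac12\rho_i v_i^2\bigr)=\tfrac12 v^2\partial_t\rho_i+\rho_i v\,\partial_t v_i$,
\begin{equation}
\label{eq:plan-p}
  \tfrac{1}{\gamma-1}\,\partial_t p_i
  = \partial_t(\rho\epsilon)_i
  = -\frac{1}{\dx}\Bigl[\bigl(\fnum_{\rho e,+}-\tfrac12 v^2\fnum_{\rho,+}\bigr)-\bigl(\fnum_{\rho e,-}-\tfrac12 v^2\fnum_{\rho,-}\bigr)\Bigr]-\rho_i v\,\partial_t v_i.
\end{equation}
These identities are purely algebraic consequences of \eqref{eq:fv}.

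For the ``if'' direction, assume $\fnum$ is PEP. All states entering \eqref{eq:plan-v} share the common velocity $v$ and pressure $p$, so the first line of \eqref{eq:pep} makes each of the two parenthesised terms equal to the \emph{same} value $\const(p,v)$; the bracket telescopes to zero and, since $\rho_i\neq 0$, $\partial_t v_i=0$. Substituting $\partial_t v_i=0$ into \eqref{eq:plan-p} removes the last term, and the second line of \eqref{eq:pep} makes the remaining bracket vanish in exactly the same way, so $\partial_t p_i=0$. Hence pressure equilibrium is preserved.

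For the ``only if'' direction, run the argument backwards: assume pressure equilibrium is preserved, so $\partial_t v_i=\partial_t p_i=0$ for every $i$ and every constant-$v$, constant-$p$ configuration. Fix $v,p$ and set $g(\rho_R,\rho_L):=\fnum_{\rho v}(u_R,u_L)-v\,\fnum_\rho(u_R,u_L)$, where $u_\bullet$ is the state with density $\rho_\bullet$ and the prescribed $v,p$. Then \eqref{eq:plan-v} with $\partial_t v_i=0$ forces $g(\rho_{i+1},\rho_i)=g(\rho_i,\rho_{i-1})$ for every node $i$ and every density profile. Since the densities at three consecutive nodes may be chosen independently, a short functional-equation argument (the left side is independent of $\rho_{i-1}$ and the right side of $\rho_{i+1}$, which together with the identity forces $g$ to be constant in both arguments) shows that $g$ equals a quantity $\const(p,v)$ depending only on $(p,v)$ --- precisely the first line of \eqref{eq:pep}. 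Feeding this back into \eqref{eq:plan-p}, where now $\partial_t v_i=0$, and repeating the argument with $\fnum_{\rho e}-\tfrac12 v^2\fnum_\rho$ in place of $g$ gives the second line of \eqref{eq:pep}. Thus $\fnum$ is PEP.

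The manipulations producing \eqref{eq:plan-v}--\eqref{eq:plan-p} and the whole ``if'' direction are routine bookkeeping. I expect the only genuinely delicate step to be in the ``only if'' direction: upgrading the single telescoping relation $g(\rho_{i+1},\rho_i)=g(\rho_i,\rho_{i-1})$ to a pointwise constraint on $\fnum$. This uses that the discrete data has enough freedom --- three consecutive, independently prescribable node densities, so a non-degenerate grid (at least three cells) is implicitly assumed --- and the observation that the resulting additive term is one and the same function of $(p,v)$ at every interface, which is exactly what permits writing ``$\const(p,v)$'' in \eqref{eq:pep} rather than merely a per-interface constant. I would also state the invariance/uniqueness reduction explicitly at the outset, since ``preserved'' refers to the flow of \eqref{eq:fv} and not to a single time level.
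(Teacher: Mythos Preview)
Your proposal is correct and follows the same approach as the paper: derive the semidiscrete evolution equations for $v$ and $p$ from \eqref{eq:fv} and read off the PEP condition. The paper's proof is terser --- it writes down the two evolution equations and then simply asserts the ``if and only if'' --- whereas you spell out the functional-equation step in the ``only if'' direction (three independently prescribable densities force $g$ to be constant) that the paper leaves implicit; your version is more complete but not different in substance.
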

\begin{proof}
  The semidiscrete evolution equation for the velocity is
  \begin{equation}
    \rho \partial_t v
    =
    \partial_t \rho v - v \partial_t \rho
    =
    - \frac{1}{\dx} \left(
      \fnum_{\rho v, +} - \fnum_{\rho v, -}
      - v ( \fnum_{\rho, +} - \fnum_{\rho, -} )
    \right).
  \end{equation}
  Similarly, for $\partial_t v = 0$, the pressure evolves according to
  \begin{equation}
    \frac{1}{\gamma - 1} \partial_t p
    =
    \partial_t \rho \epsilon
    =
    \partial_t \rho e - \frac{1}{2} v^2 \partial_t \rho
    =
    - \frac{1}{\dx} \left(
      \fnum_{\rho e, +} - \fnum_{\rho e, -}
      - \frac{1}{2} v^2 ( \fnum_{\rho, +} - \fnum_{\rho, -} )
    \right).
  \end{equation}
  Thus, $\partial_t v = 0$ and $\partial_t p = 0$ if and only if \eqref{eq:pep}
  is satisfied.
\end{proof}

We are ready to formulate the central theorem of this work and to give the answer to the first research question \ref{RQ1} in the following
\begin{theorem}
\label{thm:ec-kep-pep-fnumrho}
  The numerical flux of Ranocha \cite{ranocha2018thesis,ranocha2020entropy},
  \begin{equation}
  \label{eq:ec-kep-pep-fnumrho} 
  \begin{aligned}
    \fnum_{\rho}
    &=
    \logmean{\rho} \mean{v},
    \\
    \fnum_{\rho v}
    &=
    \logmean{\rho} \mean{v}^2 + \mean{p},
    \\
    \fnum_{\rho e}
    &=
    \frac{1}{2} \logmean{\rho} \mean{v} \prodmean{v \cdot v}
    + \frac{1}{\gamma - 1} \logmean{\rho} \logmean{\nicefrac{\rho}{p}}^{-1} \mean{v}
    + \prodmean{p \cdot v},
  \end{aligned}
  \end{equation}
  with logarithmic mean
  \begin{equation}
    \logmean{\rho} := \frac{\jump{\rho}}{\jump{\log \rho }},
  \end{equation}
  and product mean
  \begin{equation}
    \prodmean{a \cdot b} := \frac{a_+ b_- + a_- b_+}{2} =
    2 \mean{a} \mean{b} - \mean{a b},
  \end{equation}
  for the compressible Euler equations \eqref{eq:euler} is \textnormal{EC, KEP, PEP},
  and has a density flux $\fnum_{\rho}$ that does not depend on the pressure.
  Moreover, it is the only numerical flux with these properties for
  $v \equiv \const$.
\end{theorem}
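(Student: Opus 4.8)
The plan is to verify the four asserted properties of the flux \eqref{eq:ec-kep-pep-fnumrho} one at a time and then to prove the uniqueness claim by running the three structural conditions \eqref{eq:ec}, \eqref{eq:kep}, \eqref{eq:pep} backwards on the set of constant-velocity states.

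The forward verifications are short. The KEP identity \eqref{eq:kep} is immediate, $\mean{v}\fnum_\rho + \mean{p} = \logmean{\rho}\mean{v}^2 + \mean{p} = \fnum_{\rho v}$, and $\fnum_\rho = \logmean{\rho}\mean{v}$ visibly does not involve $p$. For PEP I would substitute $v_i = v_{i+1} = v$ and $p_i = p_{i+1} = p$ into \eqref{eq:ec-kep-pep-fnumrho}: then $\mean{v} = v$, $\mean{p} = p$, $\prodmean{v \cdot v} = v^2$, $\prodmean{p \cdot v} = pv$, and, since $\jump{\log\nicefrac{\rho}{p}} = \jump{\log\rho}$ while $\jump{\nicefrac{\rho}{p}} = \jump{\rho}/p$, also $\logmean{\nicefrac{\rho}{p}} = \logmean{\rho}/p$; this gives $\fnum_\rho = v\logmean{\rho}$, $\fnum_{\rho v} = v\fnum_\rho + p$ and $\fnum_{\rho e} = \tfrac12 v^2\fnum_\rho + \tfrac{\gamma}{\gamma-1}pv$, which is precisely the shape demanded by \eqref{eq:pep}. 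Entropy conservation \eqref{eq:ec} is the property for which the flux was constructed in \cite{ranocha2018thesis,ranocha2020entropy}; it comes out of expanding $\jump{w}\cdot\fnum$ with the jump product rule $\jump{ab} = \mean{a}\jump{b} + \mean{b}\jump{a}$ and the defining identities $\logmean{\rho}\jump{\log\rho} = \jump{\rho}$ and $\logmean{\nicefrac{\rho}{p}}\jump{\log\nicefrac{\rho}{p}} = \jump{\nicefrac{\rho}{p}}$, after which everything collapses to $\jump{\psi} = \jump{\rho v}$.

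For the uniqueness part, let $\fnum$ be any EC, KEP, PEP numerical flux whose density component does not depend on $p$, and restrict attention to states with $v_i = v_{i+1} = v$. Then \eqref{eq:kep} reads $\fnum_{\rho v} = v\fnum_\rho + \mean{p}$, so the momentum flux is already expressed through $\fnum_\rho$. The second and third entropy variables from \eqref{eq:w}, $w_2 = \rho v/p$ and $w_3 = -\rho/p$, satisfy $\jump{w_2} = v\jump{\rho/p}$ and $\jump{w_3} = -\jump{\rho/p}$ on constant-velocity states; inserting this and the KEP relation into \eqref{eq:ec} turns the entropy-conservation condition into a single scalar equation that solves uniquely for $\fnum_{\rho e}$ in terms of $\fnum_\rho$ wherever $\jump{\rho/p} \neq 0$, hence everywhere by continuity and consistency of $\fnum$. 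It remains to fix $\fnum_\rho$; for this I would restrict further to constant pressure $p_i = p_{i+1} = p$, where PEP \eqref{eq:pep} imposes $\fnum_{\rho v} = v\fnum_\rho + \const(p,v)$ and $\fnum_{\rho e} = \tfrac12 v^2\fnum_\rho + \const(p,v)$. Substituting both forms into \eqref{eq:ec} makes the $v^2$-contributions cancel and leaves an equation of the shape $\tfrac{\gamma}{\gamma-1}\jump{\log\rho}\,\fnum_\rho = \Lambda(p,v)\,\jump{\rho}$, so $\fnum_\rho$ must be a $(p,v)$-dependent multiple of $\logmean{\rho} = \jump{\rho}/\jump{\log\rho}$; consistency ($\fnum_\rho \to \rho v$ as $\rho_i,\rho_{i+1}\to\rho$) pins that multiple to $v$, giving $\fnum_\rho = v\logmean{\rho}$ for every constant pressure. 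Since $\fnum_\rho$ is pressure-independent, this identity then holds on all constant-velocity states irrespective of the pressures, and feeding it back into the KEP relation and the scalar equation for $\fnum_{\rho e}$ reproduces \eqref{eq:ec-kep-pep-fnumrho} restricted to $v \equiv \const$.

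I expect the uniqueness argument to be the only genuine obstacle, and the subtlety there is logical rather than computational: PEP constrains the flux only on the doubly-degenerate configurations where both $v$ and $p$ are constant, which by itself is too thin to determine $\fnum_\rho$ as a function of two independent densities, so the pressure-independence hypothesis must be used precisely to transport the identity $\fnum_\rho = v\logmean{\rho}$ off that set; one also needs the routine but necessary appeal to continuity and consistency to dispose of the removable singularities at $\jump{\log\rho} = 0$ and $\jump{\rho/p} = 0$.
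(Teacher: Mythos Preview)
Your proposal is correct and follows essentially the same route as the paper: the paper also verifies KEP and PEP directly, cites \cite{ranocha2018thesis,ranocha2020entropy} for EC, and establishes uniqueness by first restricting to $p\equiv\const$, $v\equiv\const$ to force $\fnum_\rho=\logmean{\rho}v$ via EC${}+{}$PEP, then uses pressure-independence to carry this over to all constant-velocity states, and finally lets EC and KEP determine $\fnum_{\rho v}$ and $\fnum_{\rho e}$. The only noteworthy difference is organizational---the paper packages the constant-$(p,v)$ and constant-$v$ reductions as separate preparatory lemmas---and you are in fact slightly more explicit than the paper about invoking consistency to pin the coefficient of $\logmean{\rho}$ to $v$.
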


\begin{remark}
  The motivation for the last property, \ie that the density flux does not depend
  on pressure such as \eg in the EC flux by Ismail and Roe
  \cite{ismail2009affordable}, is due to the discussion presented in
  \cite{derigs2017novel,ranocha2018comparison},
  where positivity failure could be identified for certain setups with large
  pressure jumps and constant densities.
\end{remark}

\begin{remark}
\label{rem:uniqueness-1}
  The numerical flux \eqref{eq:ec-kep-pep-fnumrho} can also be
  derived by reversing the role of energy and entropy in the
  compressible Euler equations \cite[Section~5]{ranocha2018comparison}.
  Indeed, the flux (66) of \cite{ranocha2018comparison}
  is the same as \eqref{eq:ec-kep-pep-fnumrho} developed in
  \cite[Theorem~7.8]{ranocha2018thesis}.
  This numerical flux is essentially uniquely defined by its properties,
  \cf Remark~\ref{rem:uniqueness-2}.
\end{remark}

\subsection{Proof of Theorem \ref{thm:ec-kep-pep-fnumrho}}
\label{sec:proof}

We first investigate the necessary conditions for EC and PEP and get the following 
\begin{lemma}
\label{lem:pconst-vconst-ec-kep/pep}
  For $p \equiv \const$, $v \equiv \const$, an EC numerical flux that is also
  KEP or PEP satisfies
  \begin{equation}
  \label{eq:pconst-vconst-ec-kep/pep}
    \fnum_{\rho e}
    =
    \frac{1}{2} v^2 \fnum_{\rho}
    + \frac{\gamma}{\gamma - 1} \frac{p}{\logmean{\rho}} \fnum_{\rho}.
  \end{equation}
\end{lemma}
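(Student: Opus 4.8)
The plan is to feed the pressure-equilibrium hypothesis directly into Tadmor's condition \eqref{eq:ec} and unwind the resulting algebra. First I would simplify the entropy-variable jump: when $v$ and $p$ are globally constant, the additive constant $\frac{\gamma}{\gamma-1}$, the $\log p$ contribution, and the factors $v$ and $p$ all pass through $\jump{\cdot}$ trivially, so \eqref{eq:w} collapses to
\[
\jump{w}
=
\Bigl(
  \frac{\gamma}{\gamma-1}\jump{\log\rho} - \frac{v^2}{2p}\jump{\rho},\;
  \frac{v}{p}\jump{\rho},\;
  -\frac{1}{p}\jump{\rho}
\Bigr),
\qquad
\jump{\psi} = \jump{\rho v} = v\jump{\rho}.
\]
Substituting these into \eqref{eq:ec} yields one scalar identity relating $\fnum_\rho$, $\fnum_{\rho v}$, and $\fnum_{\rho e}$.

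Next I would pin down the momentum flux: both hypotheses force $\fnum_{\rho v} = v\,\fnum_\rho + p$ in the pressure-equilibrium regime. For a KEP flux this is immediate from \eqref{eq:kep}, since $\mean{v}=v$ and $\mean{p}=p$ when $v$ and $p$ are constant. For a PEP flux, \eqref{eq:pep} a priori only gives $\fnum_{\rho v} = v\,\fnum_\rho + \const(p,v)$; evaluating at a constant state $u_i = u_{i+1}$ and using consistency $\fnum(u,u)=f(u)$ then identifies $\const(p,v) = p$.

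Finally I would combine the two ingredients. Inserting $\fnum_{\rho v} = v\,\fnum_\rho + p$ into the scalar identity from the first step, the bare $v\jump{\rho}$ terms cancel against $\jump{\psi}$, the two $v^2$-contributions combine to $\frac{v^2}{2p}\jump{\rho}\fnum_\rho$, and after multiplying by $p$ and dividing by $\jump{\rho}$ one is left with
\[
\fnum_{\rho e}
=
\frac{v^2}{2}\,\fnum_\rho
+ \frac{\gamma}{\gamma-1}\, p\, \frac{\jump{\log\rho}}{\jump{\rho}}\, \fnum_\rho ,
\]
which is exactly \eqref{eq:pconst-vconst-ec-kep/pep} upon recalling $\logmean{\rho}=\jump{\rho}/\jump{\log\rho}$. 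The degenerate case $\jump{\rho}=0$, where the logarithmic mean is understood as its limiting value $\rho$ and where $\fnum = f(u)$ by consistency, is checked directly.

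I do not anticipate a genuine difficulty here; the computation is short. The only point demanding a little care is the PEP branch of the second step: the ``constant'' in \eqref{eq:pep} is not literally $p$ by definition and must be identified through consistency before the EC relation can be exploited, whereas for a KEP flux the identity $\fnum_{\rho v}=v\,\fnum_\rho+p$ is handed to us directly by \eqref{eq:kep}.
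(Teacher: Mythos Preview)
Your proposal is correct and follows essentially the same approach as the paper: reduce $\jump{w}$ and $\jump{\psi}$ under $p,v\equiv\const$, insert $\fnum_{\rho v}=v\fnum_\rho+p$, and solve the resulting EC identity for $\fnum_{\rho e}$ using $\jump{\log\rho}=\jump{\rho}/\logmean{\rho}$. You are in fact slightly more careful than the paper in two places---identifying the PEP constant as $p$ via consistency rather than reading it off directly, and treating the degenerate case $\jump{\rho}=0$ separately---but the argument is the same.
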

\begin{proof}
  For $p \equiv \const$, $v \equiv \const$, the left-hand side of \eqref{eq:ec}
  reduces to
  \begin{equation}
    \jump{w} \cdot \fnum - \jump{\psi}
    =
    \left(
      \frac{\gamma}{\gamma - 1} \jump{\log \rho}
      - \frac{v^2}{2 p} \jump{\rho}
    \right) \fnum_{\rho}
    + \frac{v}{p} \jump{\rho} \fnum_{\rho v}
    - \frac{1}{p} \jump{\rho} \fnum_{\rho e}
    - v \jump{\rho}.
  \end{equation}
  Inserting $\fnum_{\rho v} = v \fnum_{\rho} + p$ from the KEP \eqref{eq:kep}
  or PEP \eqref{eq:pep} property and using the discrete chain rule
  \begin{equation}
  \label{eq:logmean}
    \jump{\log \rho} = \frac{\jump{\rho}}{\logmean{\rho}}
  \end{equation}
  results in
  \begin{multline}
    \left(
      \frac{\gamma}{\gamma - 1} \frac{\jump{\rho}}{\logmean{\rho}}
      - \frac{v^2}{2 p} \jump{\rho}
    \right) \fnum_{\rho}
    + \frac{v^2}{p} \jump{\rho} \fnum_{\rho}
    + v \jump{\rho}
    - \frac{1}{p} \jump{\rho} \fnum_{\rho e}
    - v \jump{\rho}
    \\
    =
    \frac{\gamma}{\gamma - 1} \frac{\jump{\rho}}{\logmean{\rho}} \fnum_{\rho}
    + \frac{v^2}{2 p} \jump{\rho} \fnum_{\rho}
    - \frac{1}{p} \jump{\rho} \fnum_{\rho e}.
  \end{multline}
  This expression has to vanish for arbitrary values of $\rho_\pm$ for an EC flux,
  resulting in \eqref{eq:pconst-vconst-ec-kep/pep}.
\end{proof}

\begin{lemma}
\label{lem:pconst-vconst-ec-pep}
  For $p \equiv \const$, $v \equiv \const$, an EC and PEP numerical flux
  must be of the form
  \begin{equation}
  \label{eq:pconst-vconst-ec-pep}
  \begin{aligned}
    \fnum_{\rho}
    &=
    \logmean{\rho} v,
    \\
    \fnum_{\rho v}
    &=
    \logmean{\rho} v^2 + p,
    \\
    \fnum_{\rho e}
    &=
    \frac{1}{2} \logmean{\rho} v^3 + \frac{\gamma}{\gamma - 1} p v.
  \end{aligned}
  \end{equation}
\end{lemma}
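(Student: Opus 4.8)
The plan is to chain the identity already extracted in Lemma~\ref{lem:pconst-vconst-ec-kep/pep} together with the PEP constraints \eqref{eq:pep} and the consistency of the numerical flux. Since the flux under consideration is both EC and PEP, Lemma~\ref{lem:pconst-vconst-ec-kep/pep} applies and gives, for $p\equiv\const$, $v\equiv\const$,
\[
  \fnum_{\rho e}
  = \frac12 v^2 \fnum_{\rho}
  + \frac{\gamma}{\gamma-1}\,\frac{p}{\logmean{\rho}}\,\fnum_{\rho}.
\]
On the other hand, in this same regime the second line of \eqref{eq:pep} reads $\fnum_{\rho e} = \frac12 v^2 \fnum_\rho + \const(p,v)$. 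Subtracting the two expressions for $\fnum_{\rho e}$ eliminates the common kinetic part and leaves the single scalar identity $\const(p,v) = \frac{\gamma}{\gamma-1}\,\frac{p}{\logmean{\rho}}\,\fnum_{\rho}$.

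First I would use this identity to pin down $\fnum_\rho$. Its left-hand side is the additive constant appearing in the PEP relations \eqref{eq:pep}, so by definition it depends only on the globally constant data $p$ and $v$, and in particular not on the two density values $\rho_i,\rho_{i+1}$ that feed into $\fnum$ and $\logmean{\rho}$. Dividing out the fixed constant $p$ therefore forces $\fnum_\rho/\logmean{\rho}$ to be a function of $(p,v)$ alone, i.e.\ $\fnum_\rho = C(p,v)\,\logmean{\rho}$ for some $C$. Consistency of the numerical flux then fixes $C$: setting $\rho_i=\rho_{i+1}=\rho$ makes $\logmean{\rho}=\rho$, while $\fnum_\rho$ must reduce to the physical mass flux $\rho v$, so $C(p,v)=v$ and $\fnum_\rho = v\logmean{\rho}$; note that this expression does not involve $p$.

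It then remains to read off the momentum and energy fluxes. The first line of \eqref{eq:pep} gives $\fnum_{\rho v} = v\fnum_\rho + \const(p,v) = v^2\logmean{\rho} + \const(p,v)$, and consistency ($v^2\logmean{\rho}\to\rho v^2$ while $\fnum_{\rho v}\to \rho v^2+p$) forces $\const(p,v)=p$, hence $\fnum_{\rho v} = v^2\logmean{\rho} + p$. Substituting $\fnum_\rho = v\logmean{\rho}$ into the displayed relation for $\fnum_{\rho e}$ collapses the second term, $\frac{\gamma}{\gamma-1}\frac{p}{\logmean{\rho}}\,v\logmean{\rho} = \frac{\gamma}{\gamma-1}pv$, and yields $\fnum_{\rho e} = \frac12 v^3\logmean{\rho} + \frac{\gamma}{\gamma-1}pv$, which is precisely \eqref{eq:pconst-vconst-ec-pep}. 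The only step that is more than bookkeeping is the density-independence argument that turns the algebraic identity for $\const(p,v)$ into the multiplicative ansatz $\fnum_\rho = C(p,v)\logmean{\rho}$; I expect this to be the main (and essentially the sole) obstacle, and it rests entirely on reading $\const(p,v)$ literally as a quantity that does not depend on the density.
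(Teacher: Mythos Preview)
Your proposal is correct and follows essentially the same route as the paper: compare the energy-flux identity from Lemma~\ref{lem:pconst-vconst-ec-kep/pep} with the PEP form \eqref{eq:pep}, use density-independence to force $\fnum_\rho \propto \logmean{\rho}$, and then read off the remaining fluxes. The only difference is that you spell out the consistency argument explicitly to fix the proportionality constant as $v$ and the additive constant in $\fnum_{\rho v}$ as $p$, whereas the paper leaves these steps implicit.
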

\begin{proof}
  Comparing \eqref{eq:pep} and \eqref{eq:pconst-vconst-ec-kep/pep},
  \begin{equation}
    \frac{\gamma}{\gamma - 1} \frac{p}{\logmean{\rho}} \fnum_{\rho}
  \end{equation}
  must be independent of $\rho_\pm$. Hence, $\fnum_{\rho}$ must be of the form
  $\fnum_{\rho} = \logmean{\rho} v$ for $p \equiv \const, v \equiv \const$.
  Inserting the PEP property \eqref{eq:pep} for $\fnum_{\rho v}$ results in the
  final form \eqref{eq:pconst-vconst-ec-pep}.
\end{proof}

\begin{lemma}
\label{lem:vconst-ec-pep}
  For $v \equiv \const$, an EC and PEP numerical flux for which the density flux
  does not depend on the pressure must be of the form
  \begin{equation}
  \label{eq:vconst-ec-pep}
  \begin{aligned}
    \fnum_{\rho}
    &=
    \logmean{\rho} v,
    \\
    \fnum_{\rho v}
    &=
    \logmean{\rho} v^2 + \phi(\rho_\pm, p_\pm),
    \\
    \fnum_{\rho e}
    &=
    \frac{1}{2} \logmean{\rho} v^3
    + \frac{1}{\gamma - 1} \logmean{\rho} \logmean{\nicefrac{\rho}{p}}^{-1} v
    + \phi(\rho_\pm, p_\pm) v,
  \end{aligned}
  \end{equation}
  where $\phi(\rho_\pm, p_\pm)$ is some kind of mean value depending on $\rho_\pm,
  p_\pm$ such that
  $\forall \rho_\pm, p > 0\colon \phi(\rho_+, \rho_-, p, p) = p$.
\end{lemma}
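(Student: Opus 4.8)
The plan is to pin down the density flux $\fnum_{\rho}$ completely, then to use that for $v\equiv\const$ Tadmor's condition \eqref{eq:ec} reduces to a single scalar equation which expresses $\fnum_{\rho e}$ through $\fnum_{\rho v}$, and finally to recognise the resulting formula in the asserted shape \eqref{eq:vconst-ec-pep} by two applications of the discrete chain rule \eqref{eq:logmean}.

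First I would determine $\fnum_{\rho}$. Restricting attention to $v\equiv\const$ together with an arbitrary but \emph{constant} pressure $p$, the flux is in particular EC and PEP on that slice, so Lemma~\ref{lem:pconst-vconst-ec-pep} gives $\fnum_{\rho}=\logmean{\rho}\,v$. Since by assumption $\fnum_{\rho}$ does not depend on the pressure at all, this identity cannot depend on which constant value $p$ was chosen; hence $\fnum_{\rho}=\logmean{\rho}\,v$ holds for all admissible $\rho_\pm$ and $p_\pm$, with $v$ still constant.

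Next I would expand \eqref{eq:ec} for $v\equiv\const$. Because $v$ is constant, the three components of the entropy variables \eqref{eq:w} have jumps $\jump{w_1}=-\tfrac{1}{\gamma-1}\jump{\log p}+\tfrac{\gamma}{\gamma-1}\jump{\log\rho}-\tfrac{v^2}{2}\jump{\rho/p}$, $\jump{w_2}=v\jump{\rho/p}$, $\jump{w_3}=-\jump{\rho/p}$, and the flux potential $\psi=\rho v$ contributes $\jump{\psi}=v\jump{\rho}$, so \eqref{eq:ec} is one scalar relation among $\fnum_{\rho},\fnum_{\rho v},\fnum_{\rho e}$. Inserting $\fnum_{\rho}=\logmean{\rho}\,v$, using \eqref{eq:logmean} in the form $\jump{\log\rho}=\jump{\rho}/\logmean{\rho}$, introducing $\phi_1:=\fnum_{\rho v}-\logmean{\rho}\,v^2$, and dividing by $\jump{\rho/p}$ (the degenerate set $\jump{\rho/p}=0$ is covered by continuity of $\fnum$) I would arrive at
\[
  \fnum_{\rho e}
  =
  \tfrac{1}{2}\logmean{\rho}\,v^3
  + \tfrac{1}{\gamma-1}\,\frac{v}{\jump{\rho/p}}\bigl(\jump{\rho}-\logmean{\rho}\jump{\log p}\bigr)
  + \phi_1\,v .
\]
Rewriting $\jump{\rho}=\logmean{\rho}\jump{\log\rho}$ turns the bracket into $\logmean{\rho}\jump{\log(\rho/p)}$, and applying \eqref{eq:logmean} once more, now to $\nicefrac{\rho}{p}$ in place of $\rho$, collapses the middle term to $\tfrac{1}{\gamma-1}\logmean{\rho}\logmean{\nicefrac{\rho}{p}}^{-1}v$. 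This is exactly \eqref{eq:vconst-ec-pep}, and the computation in fact shows that the energy remainder $\phi_2$ coincides with $\phi_1$.

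It then remains to record the stated properties of $\phi_{1,2}$. They are the remainders $\phi_1=\fnum_{\rho v}-\logmean{\rho}\,v^2$ and $\phi_2=v^{-1}\bigl(\fnum_{\rho e}-\tfrac{1}{2}\logmean{\rho}v^3-\tfrac{1}{\gamma-1}\logmean{\rho}\logmean{\nicefrac{\rho}{p}}^{-1}v\bigr)$, expressed through $\rho_\pm$ and $p_\pm$; consistency of $\fnum$ forces $\phi_1=p$ whenever the two states coincide, and, more strongly, Lemma~\ref{lem:pconst-vconst-ec-pep} applied on the slice $p_+=p_-=p$ with arbitrary $\rho_\pm$ gives $\fnum_{\rho v}=\logmean{\rho}v^2+p$, so that $\phi_{1,2}(\rho_+,\rho_-,p,p)=p$ for all $\rho_\pm,\,p>0$, as claimed. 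I expect the only delicate point to be the middle algebraic step, i.e.\ arranging the scalar EC relation so that the pressure-dependent terms telescope into $\logmean{\nicefrac{\rho}{p}}^{-1}$ through the two successive uses of the discrete chain rule; the remaining manipulations are routine bookkeeping.
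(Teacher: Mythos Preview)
Your proposal is correct and follows essentially the same route as the paper: pin down $\fnum_{\rho}=\logmean{\rho}v$ via Lemma~\ref{lem:pconst-vconst-ec-pep} together with pressure-independence of the density flux, then expand Tadmor's condition \eqref{eq:ec} for $v\equiv\const$ and use two applications of the discrete chain rule \eqref{eq:logmean} to reach \eqref{eq:vconst-ec-pep}. The only difference is organizational---the paper first parametrizes $\fnum_{\rho e}$ with a second free function and then derives a relation between the two unknowns, whereas you solve the scalar EC relation directly for $\fnum_{\rho e}$ and thereby observe that in fact $\phi_2=\phi_1$, a fact implicit but not highlighted in the paper's version.
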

\begin{proof}
  Because of Lemma~\ref{lem:pconst-vconst-ec-pep}, the general form of dependencies
  on $\rho$ for $p \equiv \const$ are already determined. The remaining degrees
  of freedom for non-constant pressure $p$ can be described by two functions
  $\phi_{1,2}$, resulting in the numerical fluxes
  \begin{equation}
  \label{eq:vconst-ec-pep-general}
  \begin{aligned}
    \fnum_{\rho}
    &=
    \logmean{\rho} v,
    \\
    \fnum_{\rho v}
    &=
    \logmean{\rho} v^2 + \phi_1(\rho_\pm, p_\pm),
    \\
    \fnum_{\rho e}
    &=
    \frac{1}{2} \logmean{\rho} v^3 + \frac{\gamma}{\gamma - 1} \phi_2(\rho_\pm, p_\pm) v,
  \end{aligned}
  \end{equation}
  where $\phi_{1,2}$ depend on $\rho_\pm, p_\pm$ such that
  \begin{equation}
    \forall p, \rho_\pm > 0\colon \quad \phi_{1,2}(\rho_+, \rho_-, p, p) = p.
  \end{equation}
  Inserting this form of the numerical flux in the left-hand side of \eqref{eq:ec}
  for $v \equiv \const$ results in
  \begin{equation}
  \begin{aligned}
    \jump{w} \cdot \fnum - \jump{\psi}
    &=
    \left(
      \jump{\log \rho}
      + \frac{1}{\gamma - 1} \jump{\log \frac{\rho}{p}}
      - \frac{v^2}{2} \jump{\frac{\rho}{p}}
    \right) \fnum_{\rho}
    + v \jump{\frac{\rho}{p}} \fnum_{\rho v}
    \\&\quad
    - \jump{\frac{\rho}{p}} \fnum_{\rho e}
    - v \jump{\rho}
    \\
    &=
    v \jump{\rho}
    + \frac{v}{\gamma - 1} \frac{\jump{\nicefrac{\rho}{p}}}{\logmean{\nicefrac{\rho}{p}}} \logmean{\rho}
    - \frac{v^3}{2} \jump{\frac{\rho}{p}} \logmean{\rho}
    + v^3 \jump{\frac{\rho}{p}} \logmean{\rho}
    \\&\quad
    + v \jump{\frac{\rho}{p}} \phi_1
    - \frac{v^3}{2} \jump{\frac{\rho}{p}} \logmean{\rho}
    - \frac{\gamma}{\gamma - 1} v \jump{\frac{\rho}{p}} \phi_2
    - v \jump{\rho}
    \\
    &=
    \frac{v}{\gamma - 1} \frac{\logmean{\rho}}{\logmean{\nicefrac{\rho}{p}}} \jump{\frac{\rho}{p}}
    + v \jump{\frac{\rho}{p}} \phi_1
    - \frac{\gamma}{\gamma - 1} v \jump{\frac{\rho}{p}} \phi_2.
  \end{aligned}
  \end{equation}
  Since this has to vanish for arbitrary $\rho_\pm, p_\pm, v$,
  \begin{equation}
    \phi_2
    =
    \frac{1}{\gamma} \frac{\logmean{\rho}}{\logmean{\nicefrac{\rho}{p}}}
    + \frac{\gamma - 1}{\gamma} \phi_1.
    \qedhere
  \end{equation}
\end{proof}

Having established the lemmata above, we are prepared to prove
Theorem~\ref{thm:ec-kep-pep-fnumrho}.
\begin{proof}[Proof of Theorem~\ref{thm:ec-kep-pep-fnumrho}]
  The KEP \eqref{eq:kep} property is satisfied by construction. Moreover, the numerical
  flux for the total energy satisfies the PEP property \eqref{eq:pep}, since
  it can be written as
  \begin{equation}
  \begin{aligned}
    \fnum_{\rho e}
    &=
    \frac{1}{2} \logmean{\rho} \mean{v} \prodmean{v \cdot v}
    + \frac{1}{\gamma - 1} \logmean{\nicefrac{1}{p}}^{-1} \mean{v}
    + \prodmean{p \cdot v}
    \\&\quad
    + \frac{1}{\gamma - 1} \left(
      \logmean{\rho} \logmean{\nicefrac{\rho}{p}}^{-1} - \logmean{\nicefrac{1}{p}}^{-1}
    \right) \mean{v},
  \end{aligned}
  \end{equation}
  where
  \begin{equation}
    \frac{1}{\gamma - 1} \left(
      \logmean{\rho} \logmean{\nicefrac{\rho}{p}}^{-1} - \logmean{\nicefrac{1}{p}}^{-1}
    \right) \mean{v}
    =
    0
  \end{equation}
  whenever $p$ is constant.
  Finally, the flux is EC as shown in \cite{ranocha2018thesis,ranocha2020entropy}.
  It is the only numerical flux with all these properties for $v \equiv \const$,
  since the KEP property \eqref{eq:kep} requires the pressure mean in
  \eqref{eq:vconst-ec-pep} to be $\phi = \mean{p}$.
\end{proof}

\begin{remark}
  The pressure mean in the momentum flux is determined uniquely by the KEP
  property \eqref{eq:kep}, resulting in a pressure mean depending on the
  density in the energy flux. As required by the PEP property \eqref{eq:pep},
  this dependency occurs only for non-constant pressure. However, such a
  mixed dependency on $\rho$ and $p$ of an approximation to the pressure is
  necessary for EC and PEP fluxes because of Lemma~\ref{lem:vconst-ec-pep}.
\end{remark}

We have obtained a complete characterization of numerical fluxes
for the compressible Euler equations that are EC, KEP, PEP, and
have a density flux $\fnum_\rho$ that does not depend on the
pressure for $v \equiv \const$ in Theorem~\ref{thm:ec-kep-pep-fnumrho}.
The analogous characterization for $p \equiv \const$ is a bit more
involved and leaves a degree of freedom.

\begin{lemma}
\label{lem:pconst-ec-kep-pep}
  For fixed $p \equiv \const$, a (symmetric) EC, KEP, and PEP
  numerical flux must be of the form
  \begin{equation}
  \label{eq:pconst-ec-kep-pep}
  \begin{aligned}
    \fnum_{\rho}
    &=
    \logmean{\rho} \mean{v}
    + \chi(\rho_\pm, v_\pm),
    \\
    \fnum_{\rho v}
    &=
    \logmean{\rho} \mean{v}^2
    + p
    + \mean{v} \chi(\rho_\pm, v_\pm),
    \\
    \fnum_{\rho e}
    &=
    \frac{1}{2} \logmean{\rho} \mean{v} \prodmean{v \cdot v}
    + \frac{\gamma}{\gamma - 1} p \mean{v}
    \\
    &\quad
    +\left(
      \frac{1}{2} \prodmean{v \cdot v}
      + \frac{\gamma}{\gamma - 1} \frac{p}{\logmean{\rho}}
    \right) \chi(\rho_\pm, v_\pm),
  \end{aligned}
  \end{equation}
  where $\chi$ is a function depending on $\rho_\pm, v_\pm$
  (symmetrically with respect to $\pm$) such that
  $\forall \rho_\pm, v \colon \chi(\rho_+, \rho_-, v, v) = 0$.
\end{lemma}
\begin{proof}
  Because of consistency, every numerical flux can be written as the
  sum of a given numerical flux and a perturbation $\chi$ that is
  consistent with zero.
  Using \eqref{eq:ec-kep-pep-fnumrho} as baseline flux
  for fixed $p \equiv \const$, every numerical flux can be written as
  \begin{equation}
  \label{eq:pconst-ec-kep-pep-ansatz}
  \begin{aligned}
    \fnum_{\rho}
    &=
    \logmean{\rho} \mean{v}
    + \chi_{\rho}(\rho_\pm, v_\pm),
    \\
    \fnum_{\rho v}
    &=
    \logmean{\rho} \mean{v}^2
    + p
    + \chi_{\rho v}(\rho_\pm, v_\pm),
    \\
    \fnum_{\rho e}
    &=
    \frac{1}{2} \logmean{\rho} \mean{v} \prodmean{v \cdot v}
    + \frac{\gamma}{\gamma - 1} p \mean{v}
    + \chi_{\rho e}(\rho_\pm, v_\pm),
  \end{aligned}
  \end{equation}
  where $\forall \rho, v \colon \chi_{\rho}(\rho, \rho, v, v) =
  \chi_{\rho v}(\rho, \rho, v, v) = \chi_{\rho e}(\rho, \rho, v, v) = 0$.
  Kinetic energy preservation \eqref{eq:kep} requires
  $\chi_{\rho v} = \mean{v} \chi_{\rho}$. Since the chosen baseline
  numerical flux \eqref{eq:ec-kep-pep-fnumrho} is EC, requiring
  entropy conservation for the perturbed numerical flux yields
  \begin{equation}
  \begin{aligned}
    0
    &=
    \jump{w} \cdot \fnum - \jump{\psi}
    =
    \left(
      \frac{\gamma}{\gamma - 1} \jump{\log \rho}
      - \frac{1}{2 p} \jump{\rho v^2}
    \right) \chi_{\rho}
    + \frac{1}{p} \jump{\rho v} \mean{v} \chi_{\rho}
    - \frac{1}{p} \jump{\rho} \chi_{\rho e}
    \\
    &=
    \left(
      \frac{\gamma}{\gamma - 1} \frac{\jump{\rho}}{\logmean{\rho}}
      - \frac{1}{2 p} \mean{v^2} \jump{\rho}
      + \frac{1}{p} \mean{v}^2  \jump{\rho}
    \right) \chi_{\rho}
    - \frac{1}{p} \jump{\rho} \chi_{\rho e}
    \\
    &=
    \left(
      \frac{1}{2 p} \prodmean{v \cdot v} \jump{\rho}
      + \frac{\gamma}{\gamma - 1} \frac{\jump{\rho}}{\logmean{\rho}}
    \right) \chi_{\rho}
    - \frac{1}{p} \jump{\rho} \chi_{\rho e}.
  \end{aligned}
  \end{equation}
  Hence,
  \begin{equation}
    \chi_{\rho e}(\rho_\pm, v_\pm)
    =
    \left(
      \frac{1}{2} \prodmean{v \cdot v}
      + \frac{\gamma}{\gamma - 1} \frac{p}{\logmean{\rho}}
    \right) \chi_{\rho}(\rho_\pm, v_\pm).
  \end{equation}
  Pressure equilibrium preservation \eqref{eq:pep} requires
  $\chi_{\rho e} = \frac{1}{2} v^2 \chi_\rho + \const(p, v)$ for
  $v \equiv \const$.
  The first term
  $\frac{1}{2} \prodmean{v \cdot v}  \chi_{\rho}$
  satisfies
  this requirement. However, the second term
  $\frac{\gamma}{\gamma - 1} \frac{p}{\logmean{\rho}} \chi_{\rho}$
  fits if and only if
  $\forall \rho_\pm, v \colon \chi(\rho_+, \rho_-, v, v) = 0$.
\end{proof}

\begin{remark}
\label{rem:uniqueness-2}
  Extending the numerical fluxes \eqref{eq:pconst-ec-kep-pep}
  developed for fixed pressure $p \equiv \const$ to general variable
  pressures results in a pressure-dependent density flux unless the
  perturbation vanishes, \ie $\chi = 0$.
  Thus, the numerical flux \eqref{eq:ec-kep-pep-fnumrho} is also
  unique for general velocities $v$ in the class of continuous numerical
  fluxes with the properties given in Theorem~\ref{thm:ec-kep-pep-fnumrho}.
\end{remark}
\begin{proof}
  Using the ansatz \eqref{eq:pconst-ec-kep-pep-ansatz} for
  a general pressure $p$ yields
  \begin{equation}
  \begin{aligned}
    \fnum_{\rho}
    &=
    \logmean{\rho} \mean{v}
    + \chi_{\rho}(\rho_\pm, v_\pm),
    \\
    \fnum_{\rho v}
    &=
    \logmean{\rho} \mean{v}^2
    + \mean{p}
    + \chi_{\rho v}(\rho_\pm, v_\pm, p_\pm),
    \\
    \fnum_{\rho e}
    &=
    \frac{1}{2} \logmean{\rho} \mean{v} \prodmean{v \cdot v}
    + \frac{1}{\gamma - 1} \logmean{\rho} \logmean{\nicefrac{\rho}{p}}^{-1} \mean{v}
    + \prodmean{p \cdot v}
    \\&\quad
    + \chi_{\rho e}(\rho_\pm, v_\pm, p_\pm),
  \end{aligned}
  \end{equation}
  where $\forall \rho, v, p \colon \chi_{\rho}(\rho, \rho, v, v) =
  \chi_{\rho v}(\rho, \rho, v, v, p, p) = \chi_{\rho e}(\rho, \rho, v, v, p, p) = 0$.
  Kinetic energy preservation \eqref{eq:kep} requires again
  $\chi_{\rho v} = \mean{v} \chi_{\rho}$. Requiring entropy
  conservation additionally yields
  \begin{align*}
    0
    &=
    \jump{w} \cdot \fnum - \jump{\psi}
    =
    \left(
      \jump{\log \rho}
      + \frac{1}{\gamma - 1} \jump{\log \nicefrac{\rho}{p}}
      - \frac{1}{2} \jump{\nicefrac{\rho v^2}{p}}
    \right) \chi_{\rho}
    \\&\quad
    \stepcounter{equation}\tag{\theequation}
    + \jump{\nicefrac{\rho v}{p}} \mean{v} \chi_{\rho}
    - \jump{\nicefrac{\rho}{p}} \chi_{\rho e}
    \\
    &=
    \left(
      \frac{\jump{\rho}}{\logmean{\rho}}
      + \frac{1}{\gamma - 1} \frac{\jump{\nicefrac{\rho}{p}}}{\logmean{\nicefrac{\rho}{p}}}
      - \frac{1}{2} \mean{v^2} \jump{\nicefrac{\rho}{p}}
      + \jump{\nicefrac{\rho}{p}} \mean{v}^2
    \right) \chi_{\rho}
    - \jump{\nicefrac{\rho}{p}} \chi_{\rho e}.
  \end{align*}
  For arbitrary $\rho_\pm, v_\pm$, choosing $p_\pm$ such that
  $\jump{\nicefrac{\rho}{p}} = 0$ requires $\chi_{\rho} = 0$.
  Hence, the perturbation $\chi$ must vanish if the density flux does
  not depend on the pressure.
\end{proof}

\subsection{The KEP and PEP two-point flux of Shima et al.}

Shima \etal \cite{shima2021preventing} introduced a modification to their
KEP flux \cite{kuya2018kinetic} and constructed a KEP flux with the PEP property,
\begin{equation}
\label{eq:shima_etal}
\begin{aligned}
  \fnum_{\rho}
  &=
  \mean{\rho} \mean{v},
  \\
  \fnum_{\rho v}
  &=
  \mean{\rho} \mean{v}^2 + \mean{p},
  \\
  \fnum_{\rho e}
  &=
  \frac{1}{2} \mean{\rho} \mean{v} \prodmean{v \cdot v}
  + \frac{1}{\gamma - 1} \mean{p} \mean{v}
  + \prodmean{p \cdot v}.
\end{aligned}
\end{equation}
We note that the density flux and the general structure of the momentum and energy fluxes
is very closely related to Ranocha's two-point flux \eqref{eq:ec-kep-pep-fnumrho},
except for the EC property, because Shima \etal use the arithmetic mean in
the density flux instead of the logarithmic mean.
Although the numerical flux \eqref{eq:shima_etal} is not EC, it has four
desirable properties, namely KEP, PEP, and a pressure-independent density flux. As we realize later in Section~\ref{sec:stability}, the fourth desirable property is the arithmetic mean of the density in the density flux function, as it enhances robustness for the density wave propagation. 

In their paper, Shima \etal demonstrate numerically very good robustness of their novel KEP and PEP discretization,
even for highly non-linear problems such as underresolved turbulence. Hence, an interesting question is whether
there is an entropy function for the compressible Euler equations such that the
two-point flux function of Shima \etal with the arithmetic mean happens to be
an EC flux. This would be a possible explanation of the improved numerical
robustness of this flux for non-linear problems. To partially answer this question, we consider next the
family of entropy functions introduced by Harten \cite{harten1983symmetric}.

Harten \cite{harten1983symmetric} discovered the family of entropy functions
for the Euler equations \eqref{eq:euler}
\begin{equation}
\label{eq:U-Harten-h}
  U = -\rho h(s),
  \qquad
  s = \log \frac{p}{\rho^\gamma},
\end{equation}
where $h$ is a sufficiently smooth function satisfying
\begin{equation}
\label{eq:h-conditions-Harten}
  \frac{h''(s)}{h'(s)} < \frac{1}{\gamma},
\end{equation}
to ensure convexity of the entropy function $U$, equation \eqref{eq:U-Harten-h}. In particular, Harten discovered
the one-parameter family
\begin{equation}
\label{eq:U-Harten-alpha}
  U = -\rho h(s),
  \qquad
  h(s)
  =
  \frac{\gamma + \alpha}{\gamma - 1} \e^{s / (\gamma + \alpha)}
  =
  \frac{\gamma + \alpha}{\gamma - 1} (p / \rho^\gamma)^{1 / (\gamma + \alpha)},
  \qquad
  \alpha > 0.
\end{equation}
Up to now, we considered the entropy \eqref{eq:U} given by $h(s) \propto s$ above,
since it is the only convex entropy \eqref{eq:U-Harten-h} which symmetrizes the
compressible Navier-Stokes equations with heat flux \cite{hughes1986new}.
Nevertheless, it is interesting to know whether there are other entropies
\eqref{eq:U-Harten-h} of the compressible Euler equations \eqref{eq:euler} that
result in a corresponding EC numerical density flux $\fnum_{\rho}$, where the mean value of the density is arithmetic.

Following the approach used in Section~\ref{sec:proof}, we will make use of
the entropy variables
\begin{equation}
\label{eq:w-Harten-h}
  w = \frac{(\gamma - 1) h'(s)}{p} \left(
    - \frac{1}{2} \rho v^2 - \frac{p}{\gamma - 1} \biggl( \frac{h(s)}{h'(s)} - \gamma \biggr),
    \rho v,
    - \rho
  \right),
\end{equation}
and the flux potential
\begin{equation}
\label{eq:psi-Harten-h}
  \psi = (\gamma - 1) h'(s) \rho v,
\end{equation}
associated with the entropy \eqref{eq:U-Harten-h}.
Lemma~\ref{lem:pconst-vconst-ec-kep/pep} is a special case of

\begin{lemma}
\label{lem:pconst-vconst-ec-kep/pep-Harten-h}
  For $p \equiv \const$, $v \equiv \const$, an EC numerical flux for the
  entropy \eqref{eq:U-Harten-h} that is also KEP or PEP satisfies
  \begin{equation}
  \label{eq:pconst-vconst-ec-kep/pep-Harten-h}
    \fnum_{\rho e}
    =
    \frac{1}{2} v^2 \fnum_{\rho}
    + \frac{p}{\gamma - 1} \frac{\jump{\gamma h' - h}}{\jump{\rho h'}}  \fnum_{\rho}.
  \end{equation}
\end{lemma}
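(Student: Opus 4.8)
The plan is to follow the proof of Lemma~\ref{lem:pconst-vconst-ec-kep/pep} line by line, using the Harten entropy variables \eqref{eq:w-Harten-h} and flux potential \eqref{eq:psi-Harten-h} in place of \eqref{eq:w} and $\psi = \rho v$. First I would evaluate these quantities for $p \equiv \const$, $v \equiv \const$. Since only $s = \log(p/\rho^\gamma)$, and hence $h(s)$ and $h'(s)$, varies across the interface, the entropy variables reduce to $w_1 = -\tfrac{(\gamma-1)v^2}{2p}\,\rho h' + (\gamma h' - h)$, $w_2 = \tfrac{(\gamma-1)v}{p}\,\rho h'$, $w_3 = -\tfrac{\gamma-1}{p}\,\rho h'$, and the potential becomes $\psi = (\gamma-1)v\,\rho h'$; consequently every jump appearing in \eqref{eq:ec} is a multiple of $\jump{\rho h'}$, with the sole exception of the extra term $\jump{\gamma h' - h}$ contributed by $w_1$.

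Next I would substitute into the entropy-conservation condition \eqref{eq:ec} and collect terms, obtaining
\begin{equation*}
  \left( \jump{\gamma h' - h} - \frac{(\gamma-1)v^2}{2p}\jump{\rho h'} \right) \fnum_{\rho}
  + \frac{(\gamma-1)v}{p}\jump{\rho h'}\,\fnum_{\rho v}
  - \frac{\gamma-1}{p}\jump{\rho h'}\,\fnum_{\rho e}
  - (\gamma-1)v\jump{\rho h'}
  = 0.
\end{equation*}
Exactly as in Lemma~\ref{lem:pconst-vconst-ec-kep/pep}, both the KEP property \eqref{eq:kep} and the PEP property \eqref{eq:pep} force $\fnum_{\rho v} = v \fnum_{\rho} + p$ once $v$ and $p$ are constant; inserting this, the term $+(\gamma-1)v\jump{\rho h'}$ produced by the $+p$ piece cancels the stand-alone term, and the two $v^2$-contributions combine into $+\tfrac{(\gamma-1)v^2}{2p}\jump{\rho h'}\fnum_{\rho}$, leaving
\begin{equation*}
  \left( \jump{\gamma h' - h} + \frac{(\gamma-1)v^2}{2p}\jump{\rho h'} \right) \fnum_{\rho}
  - \frac{\gamma-1}{p}\jump{\rho h'}\,\fnum_{\rho e}
  = 0.
\end{equation*}
Dividing by $\tfrac{\gamma-1}{p}\jump{\rho h'}$ then yields \eqref{eq:pconst-vconst-ec-kep/pep-Harten-h}. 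Unlike in the $h(s) \propto s$ case, no designated mean value needs to be introduced: the result is expressed directly through the ratio $\jump{\gamma h' - h}/\jump{\rho h'}$, which in fact makes the computation shorter than that of Lemma~\ref{lem:pconst-vconst-ec-kep/pep}, as no logarithmic-mean identity \eqref{eq:logmean} is required.

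There is no conceptual difficulty here; the only things that need care are the correct specialisation of the entropy variables \eqref{eq:w-Harten-h} and the sign bookkeeping in the cancellations. The one step that deserves explicit justification is the division by $\jump{\rho h'}$: the identity \eqref{eq:ec} holds for every admissible pair of states, so the last displayed relation holds for all $\rho_\pm > 0$ with $p,v$ constant, and along the curve $p \equiv \const$ the map $\rho \mapsto \rho\, h'(s(\rho))$ is strictly monotone, since $\frac{\mathrm{d}}{\mathrm{d}\rho}\bigl( \rho\, h'(s) \bigr) = h'(s)\bigl( 1 - \gamma\, h''(s)/h'(s) \bigr)$ is nowhere zero by the strict convexity condition \eqref{eq:h-conditions-Harten} (which in particular forces $h' \neq 0$); hence $\jump{\rho h'} \neq 0$ whenever $\rho_+ \neq \rho_-$, and the quotient extends continuously to $\rho_+ = \rho_-$. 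As a consistency check, for the entropy \eqref{eq:U}, where $h'$ is constant, $\jump{\gamma h' - h}/\jump{\rho h'}$ collapses to $\gamma/\logmean{\rho}$, so \eqref{eq:pconst-vconst-ec-kep/pep-Harten-h} reduces to \eqref{eq:pconst-vconst-ec-kep/pep}, confirming that the present lemma generalises Lemma~\ref{lem:pconst-vconst-ec-kep/pep}.
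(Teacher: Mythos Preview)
Your proposal is correct and follows essentially the same route as the paper: specialise the Harten entropy variables and flux potential to $p\equiv\const$, $v\equiv\const$, insert them into the EC condition \eqref{eq:ec}, use $\fnum_{\rho v}=v\fnum_\rho+p$ from KEP/PEP, and solve for $\fnum_{\rho e}$. Your added justification that $\rho\mapsto\rho\,h'(s(\rho))$ is strictly monotone (so that division by $\jump{\rho h'}$ is legitimate) and the consistency check against Lemma~\ref{lem:pconst-vconst-ec-kep/pep} are nice touches that the paper leaves implicit.
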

\begin{proof}
  For $p \equiv \const$, $v \equiv \const$, the left-hand side of \eqref{eq:ec}
  reduces to
  \begin{multline}
    \jump{w} \cdot \fnum - \jump{\psi}
    =
    \left(
      - \frac{1}{2} v^2 \frac{\gamma - 1}{p} \jump{\rho h'}
      - \jump{h}
      + \gamma \jump{h'}
    \right) \fnum_{\rho}
    \\
    + v \frac{\gamma - 1}{p} \jump{\rho h'} \fnum_{\rho v}
    - \frac{\gamma - 1}{p} \jump{\rho h'} \fnum_{\rho e}
    - (\gamma - 1) v \jump{\rho h'}.
  \end{multline}
  This term vanishes if and only if
  \begin{multline}
    0 =
    - \frac{1}{2} v^2 \jump{\rho h'} \fnum_{\rho}
    - \frac{1}{\gamma - 1} p \jump{h} \fnum_{\rho}
    + \frac{\gamma}{\gamma - 1} p \jump{h'} \fnum_{\rho}
    \\
    + v \jump{\rho h'} \fnum_{\rho v}
    - \jump{\rho h'} \fnum_{\rho e}
    - p v \jump{\rho h'}.
  \end{multline}
  Inserting $\fnum_{\rho v} = v \fnum_{\rho} + p$ from the KEP \eqref{eq:kep}
  or PEP \eqref{eq:pep} property results in
  \begin{equation}
    0 =
    \frac{1}{2} v^2 \jump{\rho h'} \fnum_{\rho}
    - \frac{1}{\gamma - 1} p \jump{h} \fnum_{\rho}
    + \frac{\gamma}{\gamma - 1} p \jump{h'} \fnum_{\rho}
    - \jump{\rho h'} \fnum_{\rho e}.
  \end{equation}
  This expression has to vanish for arbitrary values of $\rho_\pm$ for an EC flux,
  resulting in \eqref{eq:pconst-vconst-ec-kep/pep-Harten-h}.
\end{proof}

Comparing \eqref{eq:pep} and \eqref{eq:pconst-vconst-ec-kep/pep-Harten-h},
a PEP flux that is also EC for \eqref{eq:U-Harten-h} must contain an average
of the density proportional to
\begin{equation}
\label{eq:Harten-h-rho-average}
  \frac{ \jump{\rho h'} }{ \jump{\gamma h' - h} }.
\end{equation}
In general, \eqref{eq:Harten-h-rho-average} is not the arithmetic mean of $\rho_\pm$.
It becomes the linear mean proportional to $\mean{\rho}$ for $p \equiv \const$
for the choice of $h$ as in \eqref{eq:U-Harten-alpha} with $\alpha = - 2 \gamma$. However, in this case, the resulting $U$ is not convex anymore. Hence, entropy-conservative
and pressure equilibrium preserving numerical fluxes for the compressible
Euler equations have to use nonlinear means of the density and we have
demonstrated that the two-point flux of Shima \etal is not related to one
of Harten's entropies.

\begin{corollary}
\label{cor:EC-KEP-PEP-arithmetic}
  There is no Harten entropy pair for the compressible Euler equations
  such that a corresponding EC two-point flux with the KEP and PEP property
  uses the arithmetic mean of the density in the density flux.
\end{corollary}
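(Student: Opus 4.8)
The plan is to argue by contradiction. Suppose that for some Harten entropy pair \eqref{eq:U-Harten-h}--\eqref{eq:h-conditions-Harten} there is an EC two-point flux which is at the same time KEP and PEP and whose density flux $\fnum_\rho$ depends on $\rho_\pm$ only through the arithmetic mean $\mean{\rho}$. I would first specialize to a state with $p\equiv\const$ and $v\equiv\const$: there KEP and PEP impose the same condition $\fnum_{\rho v}=v\fnum_\rho+p$ on the momentum flux, so Lemma~\ref{lem:pconst-vconst-ec-kep/pep-Harten-h} applies and gives $\fnum_{\rho e}=\tfrac12 v^2\fnum_\rho+\tfrac{p}{\gamma-1}\,\tfrac{\jump{\gamma h'-h}}{\jump{\rho h'}}\,\fnum_\rho$. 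The PEP relation \eqref{eq:pep} forces the second summand to be independent of $\rho_\pm$, which means $\fnum_\rho$ must be proportional to $\jump{\rho h'}/\jump{\gamma h'-h}$; evaluating at $\rho_+=\rho_-$ and using consistency ($\fnum_\rho\to\rho v$) fixes the proportionality constant. Imposing that $\fnum_\rho$ be the arithmetic mean then turns everything into the single functional equation $\jump{\rho h'}/\jump{\gamma h'-h}\propto\mean{\rho}$, required to hold for all $\rho_\pm>0$ and every constant $p>0$.

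The heart of the proof is to extract the rigidity of $h$ from this identity. At fixed $p$ I would use $s=\log(p/\rho^\gamma)$, \ie $\rho=p^{1/\gamma}\e^{-s/\gamma}$, and abbreviate $F(\rho):=\rho h'(s)$ and $G(\rho):=\gamma h'(s)-h(s)$, so that the constraint reads $\jump{F}=\const\cdot\mean{\rho}\,\jump{G}$. Differentiating this first with respect to $\rho_+$ and then with respect to $\rho_-$ shows that $G$ must be an affine function of $\rho$ and $F$ a quadratic one; transcribing these back into functions of $s$ and using that $h$ itself cannot depend on the arbitrary value of $p$ pins $h'(s)$ down to a multiple of $\e^{-s/\gamma}$. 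Equivalently, up to an affine reparametrization, $h$ is the member $\alpha=-2\gamma$ of Harten's family \eqref{eq:U-Harten-alpha}, exactly as observed around \eqref{eq:Harten-h-rho-average}. (The second exponential mode $\e^{+s/\gamma}$ that formally survives the differentiation argument corresponds to $\alpha=0$, where $h''/h'\equiv1/\gamma$ and strict convexity is lost.)

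To finish, I would invoke that $\alpha=-2\gamma$ lies outside the parameter range for which \eqref{eq:U-Harten-h}, \eqref{eq:h-conditions-Harten} defines a genuine convex entropy for the compressible Euler equations, so no admissible Harten entropy pair can realize the assumed situation, which is the claim of the corollary. The step I expect to be the main obstacle is the middle one: turning the scalar identity $\jump{\rho h'}/\jump{\gamma h'-h}\propto\mean{\rho}$ into a full characterization of $h$. Care is needed because the proportionality constant may itself depend on $p$, because two exponential modes $\e^{\pm s/\gamma}$ a priori survive, and because one is ruling out \emph{every} Harten entropy and not only the one-parameter subfamily \eqref{eq:U-Harten-alpha}; the cleanest route I see is to demand that $\fnum_\rho$ be the pressure-independent arithmetic density mean for all admissible states, so that the $p\equiv\const$ reduction above already forces $h'\propto\e^{-s/\gamma}$, after which the convexity obstruction closes the argument.
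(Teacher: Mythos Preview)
Your approach is genuinely more ambitious than the paper's.  The paper does not solve for a general Harten function $h$: it simply derives the necessary density--average form \eqref{eq:Harten-h-rho-average}, then restricts attention to the explicit one-parameter subfamily \eqref{eq:U-Harten-alpha}, observes that the arithmetic mean appears only at the formal value $\alpha=-2\gamma$, and declares that this lies outside the admissible range.  You instead try to characterise \emph{all} admissible $h$ via a functional equation, which is a sound idea, and your differentiation argument showing that $G(\rho)=\gamma h'-h$ must be affine and $F(\rho)=\rho h'$ quadratic in $\rho$ (at fixed $p$) is correct.

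The gap is the step where you claim that ``$h$ itself cannot depend on the arbitrary value of $p$ pins $h'(s)$ down to a multiple of $e^{-s/\gamma}$''.  That is not what the constraints give: the general solution is $h'(s)=A\,e^{-s/\gamma}+D\,e^{s/\gamma}$ with $A,D$ \emph{independent of $p$}, and for every such $h$ one checks directly that $\jump{\rho h'}/\jump{\gamma h'-h}=\mean{\rho}/\gamma$ at each constant pressure --- the $D$-term contributes only the constant $D\,p^{1/\gamma}$ to $F$ and hence drops out of $\jump{F}$, while in $G$ it cancels entirely.  Your proposed ``cleanest route'' of demanding a pressure-independent arithmetic density mean therefore does not eliminate the $D$-mode either, since the resulting mean is already $\mean{\rho}$ for every $p$.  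Dismissing the $D$-mode as ``$\alpha=0$, where strict convexity is lost'' handles only the pure case $A=0$; a genuine mixture $A,D\neq 0$ is neither $\alpha=0$ nor $\alpha=-2\gamma$, and you would still have to rule it out via the convexity requirement \eqref{eq:h-conditions-Harten}, which is a separate analysis you have not carried out.  The paper sidesteps this issue entirely by never leaving the subfamily \eqref{eq:U-Harten-alpha}, at the cost of proving a statement that is, strictly speaking, narrower than the corollary as phrased.
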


So far, we have not found any evidence, that there is another strictly convex entropy pair for which the EC flux with KEP and PEP might have an arithmetic mean and thus have the conjecture, that there is none.

Finally, it is interesting to see whether the arithmetic mean can be used in the density flux of an EC flux if the
additional constraints are relaxed by not requiring the KEP/PEP property
anymore. Considering again the family of entropies \eqref{eq:U-Harten-h}, we consider the case $v \equiv \const$,
$h'(s) \rho / p \equiv \const$. Inserting the entropy variables \eqref{eq:w-Harten-h} into the EC condition
\eqref{eq:ec} results in
\begin{equation}
\label{eq:Harten-h-EC}
  0
  =
  \jump{w} \cdot \fnum - \jump{\psi}
  =
  - \jump{h(s) - \gamma h'(s)} \fnum_{\rho}
  - (\gamma - 1) \jump{h'(s) \rho v}.
\end{equation}
Hence, the density flux must again contain an average of the density
proportional to \eqref{eq:Harten-h-rho-average}, but for the case of
$h'(s) \rho  p \equiv \const$ instead of the case $p \equiv \const$
discussed above.
For a given entropy such as \eqref{eq:U} or \eqref{eq:U-Harten-alpha}, it is
easy to solve $h'(s_+) \rho_+ / p_+ = h'(s_-) \rho_- / p_-$ for $\rho_+$ and
pick values of $p_\pm, \rho_-$  such that \eqref{eq:Harten-h-EC} is not
satisfied by $\fnum_{\rho} = \mean{\rho} v$. Hence, we arrive at

\begin{corollary}
\label{cor:EC-arithmetic}
  There is no Harten entropy pair for the compressible Euler equations
  such that a corresponding EC two-point flux uses the arithmetic mean of density
  in the density flux.
\end{corollary}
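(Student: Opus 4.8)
The plan is to mirror the strategy behind Corollary~\ref{cor:EC-KEP-PEP-arithmetic}, but now exploiting \emph{only} the entropy-conservation constraint. First I would restrict attention to the sub-family of states with $v \equiv \const$ and $h'(s)\,\rho/p \equiv \const$; an EC flux for a Harten entropy \eqref{eq:U-Harten-h} must in particular satisfy \eqref{eq:ec} for all such pairs. Substituting the entropy variables \eqref{eq:w-Harten-h} and the flux potential \eqref{eq:psi-Harten-h} and simplifying --- the $\fnum_{\rho v}$ and $\fnum_{\rho e}$ contributions collapse because $\jump{\rho v/p} = v\,\jump{\rho/p}$ while $h'(s)\,\rho/p$ is held fixed --- reproduces precisely the reduced identity \eqref{eq:Harten-h-EC}. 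Solving it for $\fnum_\rho$ then shows that, on this sub-family, the EC property forces
\begin{equation*}
  \fnum_\rho = (\gamma-1)\,v\,\frac{\jump{\rho\,h'(s)}}{\jump{\gamma h'(s) - h(s)}} ,
\end{equation*}
i.e. the density flux must contain the nonlinear average \eqref{eq:Harten-h-rho-average}, now evaluated along $h'(s)\,\rho/p \equiv \const$ instead of along $p \equiv \const$ as in Lemma~\ref{lem:pconst-vconst-ec-kep/pep-Harten-h}.

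Next I would suppose, for contradiction, that some admissible Harten entropy \eqref{eq:U-Harten-h} admits an EC flux whose density flux uses the arithmetic mean, i.e. $\fnum_\rho = \mean{\rho}\,v$ on this sub-family. Equating the two expressions yields the functional identity
\begin{equation*}
  \mean{\rho}\,\jump{\gamma h'(s) - h(s)} = (\gamma-1)\,\jump{\rho\,h'(s)} ,
\end{equation*}
which would have to hold for all state pairs on every constraint surface $\{ h'(s)\,\rho/p \equiv c \}$. On such a surface the state is a function of $s$ alone, and the constant $c$ factors out of the identity entirely, so it becomes a relation between two functions of $s_-$ and $s_+$ only. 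Rewriting the jumps via the fundamental theorem of calculus, one recognizes this relation as the statement that the trapezoidal rule is exact on every subinterval for a suitable integrand built from $h$; exactness on all intervals forces that integrand to be affine, which amounts to a first-order differential relation linking $h$ and $h'$ (roughly, $\e^{-s/(\gamma-1)}h'(s)^{1/(\gamma-1)}$ being an affine function of $\gamma h'(s)-h(s)$). One then checks --- exactly as the analogous computation in Corollary~\ref{cor:EC-KEP-PEP-arithmetic} singled out $\alpha = -2\gamma$ --- that the solutions of this relation reduce to a member of Harten's one-parameter family \eqref{eq:U-Harten-alpha} with $\alpha$ such that the convexity condition \eqref{eq:h-conditions-Harten} is violated, contradicting admissibility. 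For the two entropies of actual interest, the physical one \eqref{eq:U} and the family \eqref{eq:U-Harten-alpha}, the differential relation can be bypassed altogether: one solves $h'(s_+)\rho_+/p_+ = h'(s_-)\rho_-/p_-$ explicitly for $\rho_+$ and picks, say, $\rho_- = 1$ with $p_+ \neq p_-$ to violate the identity directly, as indicated before the corollary.

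The step I expect to be the main obstacle is that the identity above cannot be dismissed by a cheap ``these two means differ'' argument: the leading-order condition obtained by matching the first-order parts (in $s_+ - s_-$) of both sides is vacuous --- it holds for \emph{every} $h$, since it is merely consistency of the flux, and the constant $c$ drops out there too --- so the genuine obstruction only appears at second order in the jump. The trapezoidal-rule reformulation is the device that keeps this second-order analysis tractable; the remaining delicate point, just as in the proof of Corollary~\ref{cor:EC-KEP-PEP-arithmetic}, is to verify carefully that the one-parameter family of $h$ satisfying the resulting differential relation genuinely lies outside the class of strictly convex Harten entropies, so that it cannot serve as a counterexample to the corollary.
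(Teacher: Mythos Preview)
Your setup is exactly the paper's: restrict to $v \equiv \const$ and $h'(s)\,\rho/p \equiv \const$, insert the entropy variables and flux potential to obtain the reduced EC condition \eqref{eq:Harten-h-EC}, and read off that the density flux is forced to be $(\gamma-1)\,v\,\jump{\rho h'}/\jump{\gamma h'-h}$, i.e.\ the nonlinear average \eqref{eq:Harten-h-rho-average}. Up to this point you and the paper coincide word for word.

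Where you diverge is the contradiction step. The paper does \emph{not} attempt your trapezoidal-rule / functional-equation analysis. It simply observes that, for any fixed Harten entropy, the constraint $h'(s_+)\rho_+/p_+ = h'(s_-)\rho_-/p_-$ can be solved for $\rho_+$ in terms of $\rho_-, p_\pm$, and then a direct choice of $\rho_-, p_-, p_+$ makes \eqref{eq:Harten-h-EC} fail for $\fnum_\rho = \mean{\rho}\,v$; the paper only spells this out for the physical entropy \eqref{eq:U} and the one-parameter family \eqref{eq:U-Harten-alpha}. Your final paragraph already describes this shortcut. Your longer route --- recasting the identity as trapezoidal exactness, deducing that $(h'(s)e^{-s})^{1/(\gamma-1)}$ is affine in $\gamma h'(s)-h(s)$, and then arguing that all solutions violate \eqref{eq:h-conditions-Harten} --- would, if completed, give a uniform argument covering every admissible $h$ at once rather than a case-by-case check. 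The cost is that the last step (classifying the solutions of that affine relation and showing they are all non-convex) is a genuine first-order nonlinear ODE problem that you have not actually carried out, and it is not obvious that its solution set is just the one-parameter family \eqref{eq:U-Harten-alpha} at a bad $\alpha$. So your approach is sound in outline but strictly more work than what the paper does; the paper's argument is shorter but correspondingly less explicit about the general $h$.
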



\begin{remark}
  We refer to an entropy pair if the entropy function is strictly convex
  (resulting in an invertible transformation from the conserved variables
  to the entropy variables). Linear functionals of the conserved variables
  are of course non-strictly convex and can be combined with a density flux
  using the arithmetic mean.
\end{remark}

\section{On local linear stability of EC schemes with the PEP property}
\label{sec:stability}

In this section, we consider the (local) linear stability \cite{gassner2020stability} of high-order
discretizations based on two-point fluxes. The extension to high-order
accuracy is relatively straight forward when assuming the SBP property.
Several classes of numerical methods can be formulated via (periodic) SBP
operators, including finite difference \cite{kreiss1974finite,strand1994summation},
finite volume \cite{nordstrom2001finite,nordstrom2003finite},
continuous Galerkin \cite{hicken2016multidimensional,hicken2020entropy},
discontinuous Galerkin \cite{gassner2013skew},
and flux reconstruction methods \cite{ranocha2016summation}.
A brief review how to formulate these methods in the SBP framework with
application to structure-preserving numerical methods can be found in
\cite{ranocha2021broad}.
Further details and background information about SBP methods can be found \eg in
the review articles \cite{fernandez2014review,svard2014review}.

Building upon earlier works such as \cite{lefloch2002fully,sjogreen2010skew},
Fisher and Carpenter \cite{fisher2013high} created conservative high-order semi-discretizations of
hyperbolic conservation laws using a special class of two-point numerical
fluxes. The final extension to general symmetric numerical fluxes was obtained
in \cite{chen2017entropy,ranocha2018comparison,gassner2016split} and will be
recalled briefly below.

\subsection{Numerical investigation of the robustness of the split-form DG scheme}
\label{sec:num_inv}

In this part, we consider the split-form DG approximation with the numerical
fluxes discussed above and apply them to solve a simple density wave problem. 
In particular, we compare the results when using a central flux with arithmetic means, Ranocha's
two-point flux function \eqref{eq:ec-kep-pep-fnumrho} that is EC, KEP and PEP,
and the two-point flux by Shima \etal \eqref{eq:shima_etal} that is KEP and PEP.

Following \cite{gassner2020stability}, we consider the two-dimensional
compressible Euler equations with the initial condition
\begin{equation}
\label{eq:Euler-density-wave}
  \rho = 1 + 0.98 \sin(2\pi (x_1 + x_2)),
  \quad
  v_1 = 0.1,
  \quad
  v_2 = 0.2,
  \quad
  p = 20,
  \quad x \in [-1, 1]^2,
\end{equation}
and fully periodic boundary conditions.
We use the split-form DG methods with spectral collocation on Legendre-Gauss-Lobatto nodes, with a polynomial degree of $N = 5$ on a grid with $4 \times 4$ elements implemented in the open source code \trixi
\cite{schlottkelakemper2020trixi,schlottkelakemper2020purely}.
The semi-discretizations are integrated in time using the fourth-order, five-stage,
low-storage Runge-Kutta method of \cite{carpenter1994fourth} with a relative CFL number
$\mathtt{cfl} = 0.05$, which then gets additionally scaled by the choice of polynomial degree $N$, as is common for DG. This ensures a negligible impact of the time integrator
on the numerical solution. 

\begin{remark}
The numerical methods applied in this article are written in Julia
\cite{bezanson2017julia}. The plots are created using Matplotlib
\cite{hunter2007matplotlib}.
The source code necessary to reproduce all results shown in this article
is available online \cite{ranocha2020ec-kep-pep-repro}.
We use the numerically stable evaluation of the logarithmic mean proposed
in \cite{ismail2009affordable} in \trixi.
\end{remark}

The simulation with the pure central approximation with the arithmetic mean flux
is stable for all times for the density wave \eqref{eq:Euler-density-wave}.
We emphasize that we use the central flux for both, the split-form
volume integral and the surface integral fluxes --- so there is no added
numerical dissipation. This shows that in principle, the problem is very
well resolved by the chosen DG discretization. We can further
numerically confirm, that the modification of Shima \etal \cite{shima2021preventing}
gives a high-order split-form DG discretization, that is able to robustly
run this test case for very long integration times ($t > 100$,
corresponding to more than \num{935000} time steps). Hence, at first, it
seems that the added PEP property indeed solves the robustness
issue. However, in accordance with the findings of \cite{gassner2020stability}
for other EC fluxes, the high-order DG discretization with Ranocha's
EC flux \eqref{eq:ec-kep-pep-fnumrho} crashes because of negative
density already at $t \approx 0.55$. We note that Ranocha's flux is
KEP and PEP, \ie, it preserves the pressure equilibrium by construction
and there are no fluctuations in velocity and pressure!

Following these numerical results, we can already answer our second
research question and state, that, unfortunately, the answer to (RQ2)
is \emph{no}, the PEP property is not a remedy for the stability issues
of the EC fluxes, as the simple density wave problem still crashes after
very short simulation times. Note, that these findings are not sensitive
to the choice of the $\mathtt{cfl}$ number or the time integration
method.

However, it is interesting that the Shima \etal flux can indeed robustly
run the density wave test case. Because of the PEP property, both the
flux \eqref{eq:shima_etal} of Shima \etal and the flux
\eqref{eq:ec-kep-pep-fnumrho} of Ranocha reduce the density wave
\eqref{eq:density_wave} for the compressible Euler equations to four
linear advection equations. The main difference between the Shima
\etal flux and Ranocha's flux is the mean values of the density used in
the density flux: The former uses the arithmetic mean to approximate
the linear advection, the latter uses the logarithmic mean to discretize
the linear advection.

\subsection{Stability for linear advection: Investigation of the impact of the choice of the mean value}
\label{sec:linadv-mean-values}

In this subsection, we focus on how the choice of the mean values in a
split-form approximation of the linear advection equation influences
the stability. For this purpose, we consider periodic high-order SBP
discretizations of the linear advection equation. We use
\begin{definition}
  A periodic SBP operator consisting of
  a derivative matrix $D$ approximating the first derivative as
  $D u \approx \partial_x u$,
  and a mass matrix $M$ approximating the $L^2$ scalar product via
  $u^T M v \approx \int u v$,
  such that
  \begin{equation}
  \label{eq:SBP-periodic}
    M D + D^T M = 0.
  \end{equation}
\end{definition}
Given a SBP operator $D$ with symmetric mass matrix $M$ and a symmetric
two-point numerical flux $\fnum$ for the hyperbolic conservation law
\begin{equation}
\label{eq:cl-1D}
  \partial_t u + \partial_x f(u) = 0,
\end{equation}
the semi-discretization
\begin{equation}
\label{eq:cl-1D-SBP}
  \partial_t u_i + \sum_{l} 2 D_{i,l} \fnum(u_i, u_l) = 0
\end{equation}
is a conservative approximation of \eqref{eq:cl-1D} with at least the same order
of accuracy as the SBP operator
\cite{fisher2013high,chen2017entropy,ranocha2018comparison}.
Moreover, the semi-discretization conserves the entropy $U$ of \eqref{eq:cl-1D}
if the numerical flux is entropy-conservative for that entropy $U$
\cite{fisher2013high}.

Following \cite{gassner2020stability}, we compute the spectrum of a
semi-discretization \eqref{eq:cl-1D-SBP} of the linear advection equation
\begin{equation}
\begin{aligned}
  \partial_t u(t, x) + \partial_x u(t, x) &= 0,\\
  u(0, x) &= 2 + 1.9 \sin(\pi x),
\end{aligned}
\end{equation}
in a periodic domain $x \in [0, 2]$. The scheme \eqref{eq:cl-1D-SBP} is implemented
in Julia \cite{bezanson2017julia} and the Jacobian of the semi-discretization
is computed via forward-mode automatic differentiation (AD) \cite{revels2016forward}. Note that AD
is not necessary if a linear numerical flux is used for this linear PDE.
However, we are also interested in nonlinear numerical fluxes,
involving \eg the logarithmic mean. For nonlinear discretizations,
it is not as trivial to compute the Jacobian and AD becomes a
valuable tool.

We note that if the numerical flux is chosen as the arithmetic mean, $\fnum = \mean{u}$,
the semi-discretization is linear and skew-symmetric (with respect to the scalar
product induced by the mass matrix $M$). This semi-discretization conserves the $L^2$ entropy $U(u) = u^2 / 2$.
Hence, as expected, all eigenvalues are purely imaginary in our numerical test. 

In contrast, choosing the numerical flux as the logarithmic mean
$\fnum = \logmean{u}$ results in a nonlinear semi-discretization, which
conserves the entropy $U(u) = u \log u - u$ with entropy flux $F(u) = u \log u - u$.
Indeed, the corresponding entropy variables are $w(u) = U'(u) = \log(u)$ and
the flux potential is $\psi(u) = u$. Hence, the associated entropy-conservative
numerical flux is $\fnum(u) = \jump{\psi(u)} / \jump{w(u)} = \jump{u} / \jump{\log u}
= \logmean{u}$. The resulting spectra of the Jacobian of the semi-discretization with logarithmic mean values
are shown in Figure~\ref{fig:spectra} for different choices of SBP operators.
\begin{figure}[!tp]
\centering
  \begin{subfigure}{\textwidth}
    \centering
    \begin{subfigure}{0.49\textwidth}
    \centering
      \includegraphics[width=\textwidth]{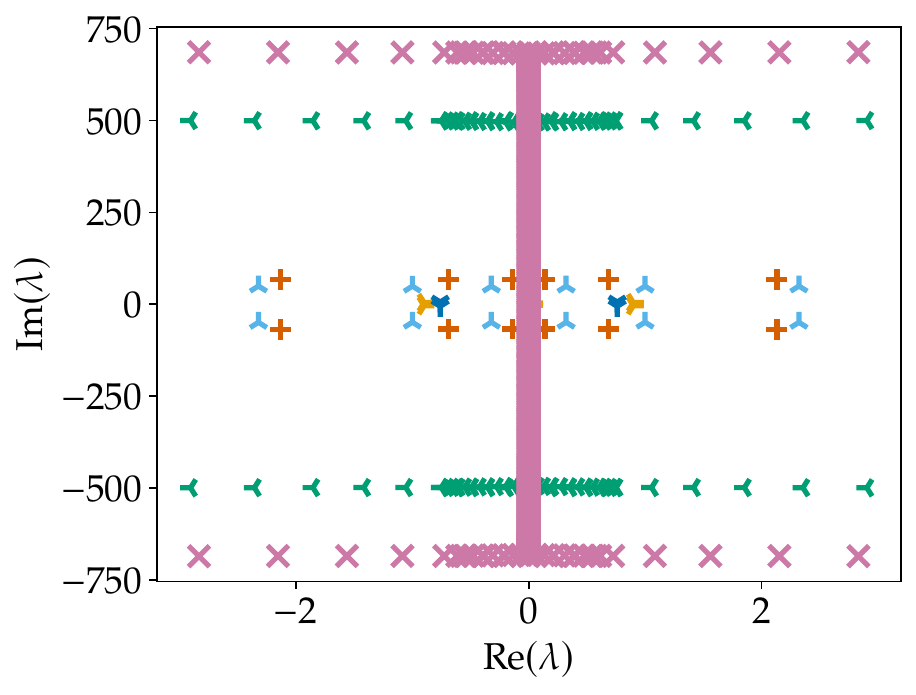}
    \end{subfigure}%
    \hspace*{\fill}
    \begin{subfigure}{0.49\textwidth}
    \centering
      \includegraphics[width=\textwidth]{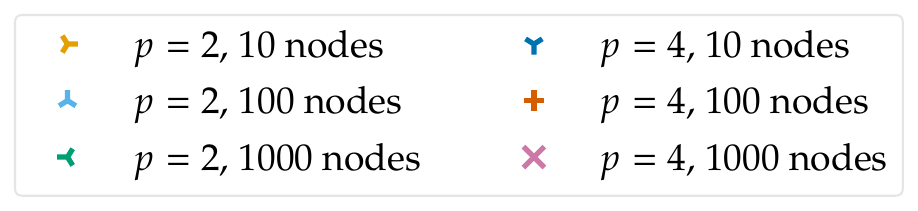}
    \end{subfigure}%
    \caption{Finite difference methods of order $p \in \{2, 4\}$ using different
             numbers of nodes.}
    \label{fig:spectra-FD}
  \end{subfigure}%
  \\
  \begin{subfigure}{\textwidth}
    \centering
    \begin{subfigure}{0.49\textwidth}
    \centering
      \includegraphics[width=\textwidth]{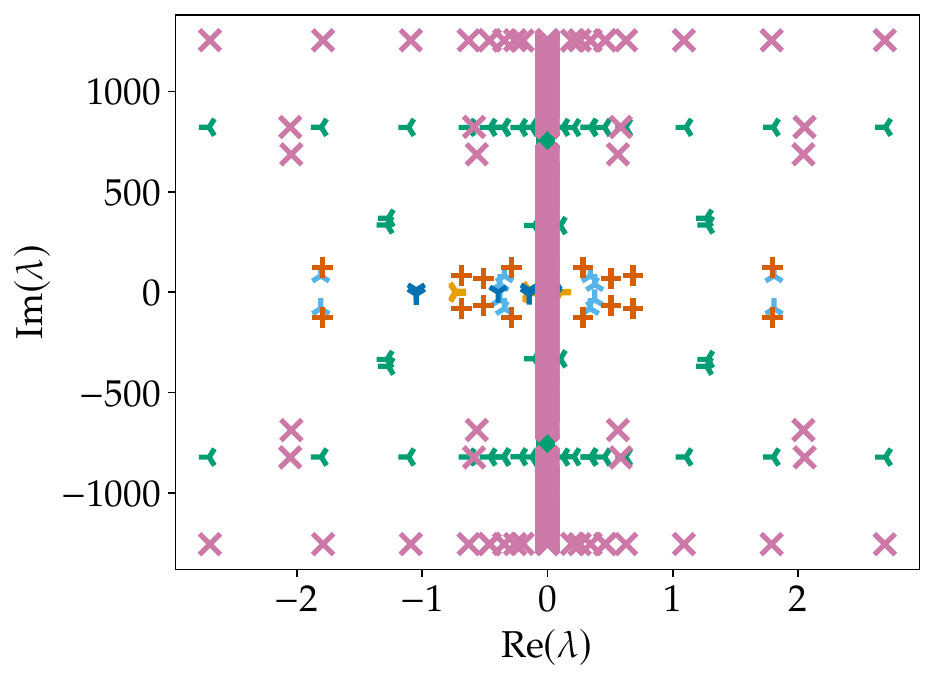}
    \end{subfigure}%
    \hspace*{\fill}
    \begin{subfigure}{0.49\textwidth}
    \centering
      \includegraphics[width=\textwidth]{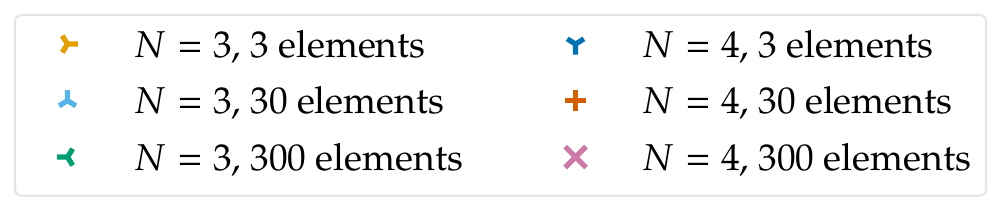}
    \end{subfigure}%
    \caption{Continuous Galerkin methods using polynomials of degree
             $N \in \{3, 4\}$ on Lobatto Legendre bases and different numbers
             of elements.}
    \label{fig:spectra-CG}
  \end{subfigure}%
  \\
  \begin{subfigure}{\textwidth}
    \centering
    \begin{subfigure}{0.49\textwidth}
    \centering
      \includegraphics[width=\textwidth]{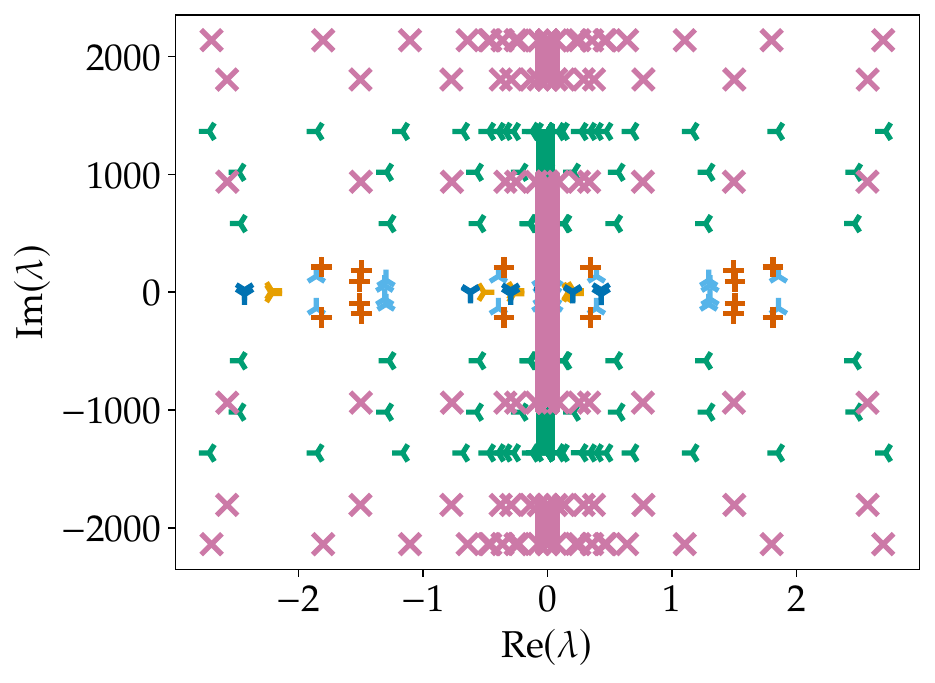}
    \end{subfigure}%
    \hspace*{\fill}
    \begin{subfigure}{0.49\textwidth}
    \centering
      \includegraphics[width=\textwidth]{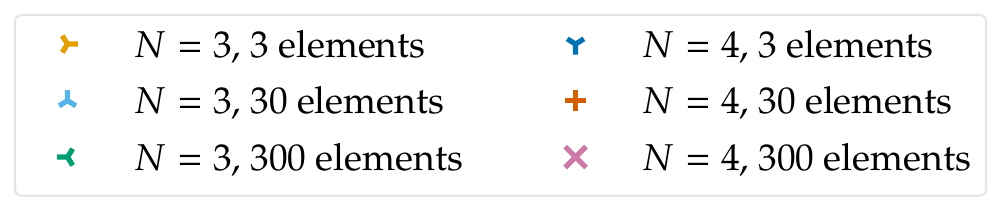}
    \end{subfigure}%
    \caption{Discontinuous Galerkin methods using polynomials of degree
             $N \in \{3, 4\}$ on Lobatto Legendre bases and different numbers
             of elements.}
    \label{fig:spectra-DG}
  \end{subfigure}%
  \caption{Spectra of nonlinear semi-discretizations \eqref{eq:cl-1D-SBP}
           using different variants of SBP operators and the logarithmic mean
           as numerical flux.}
  \label{fig:spectra}
\end{figure}
Clearly, all of the spectra have eigenvalues with positive real part (of order unity)
that do not converge to zero under grid refinement. In particular, these
eigenvalues with positive real part occur for all choices of semi-discretizations.
We checked that the occurrence of eigenvalues with positive real parts does
not depend on the parity of the number of nodes/elements or the
polynomial degree. For the investigation of spectra based on
discretizations with other mean value choices, we refer to
Appendix~\ref{sec:appendix2}. All other mean values tested give
discretizations where the spectra have significant positive real parts.

This nicely suggests that the reason
for the stability issues of the high-order split-form DG scheme with the
EC flux of Ranocha for the simple density wave example is due to the
logarithmic mean of the density in the density flux. Even with the PEP
property, which guarantees that pressure and velocity stay constant
throughout the simulation, the discretization of the density evolution
is unstable when using the logarithmic mean, while the discretization
with the flux of Shima \etal is based on an arithmetic mean of the
density and hence runs the example robustly. It remains to discuss
however, if the Shima \etal flux is locally linearly stable as defined in
\cite{gassner2020stability}, i.e., if the spectrum of the linearized
operator is stable towards perturbations.

\subsection{Investigation of local linear stability}

We want to dig deeper and analyze the respective spectra of the
Jacobians of the different DG semi-discretizations for the
two-dimensional compressible Euler equations. We observed in
Section~\ref{sec:num_inv} that the EC scheme immediately crashes
whereas the central scheme and the scheme powered by the Shima
\etal flux run for very long times ($t > 100$) without any problems.

For the linearization, we use the initial condition as the linearization
state and compute the Jacobians approximately with a central finite
difference approach in our simulation framework
\trixi \cite{schlottkelakemper2020trixi}. The resulting spectra for the
central flux with arithmetic means, the flux by Shima \etal, and
Ranocha's EC flux are shown in Figure~\ref{fig:spectra-Euler}.
As in Section~\ref{sec:num_inv}, we use the same numerical flux for the
volume terms and the surface terms without any further dissipation.
\begin{figure}[!htp]
\centering
  \begin{subfigure}{0.33\textwidth}
  \centering
    \includegraphics[width=\textwidth]{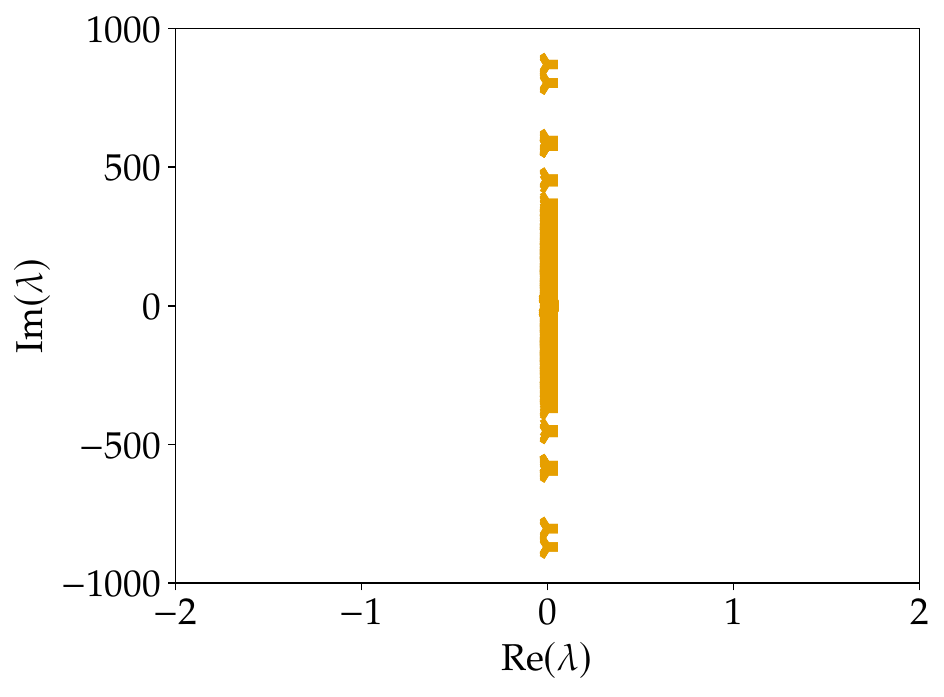}
    \caption{Central flux.}
  \end{subfigure}%
  \hspace*{\fill}
  \begin{subfigure}{0.33\textwidth}
  \centering
    \includegraphics[width=\textwidth]{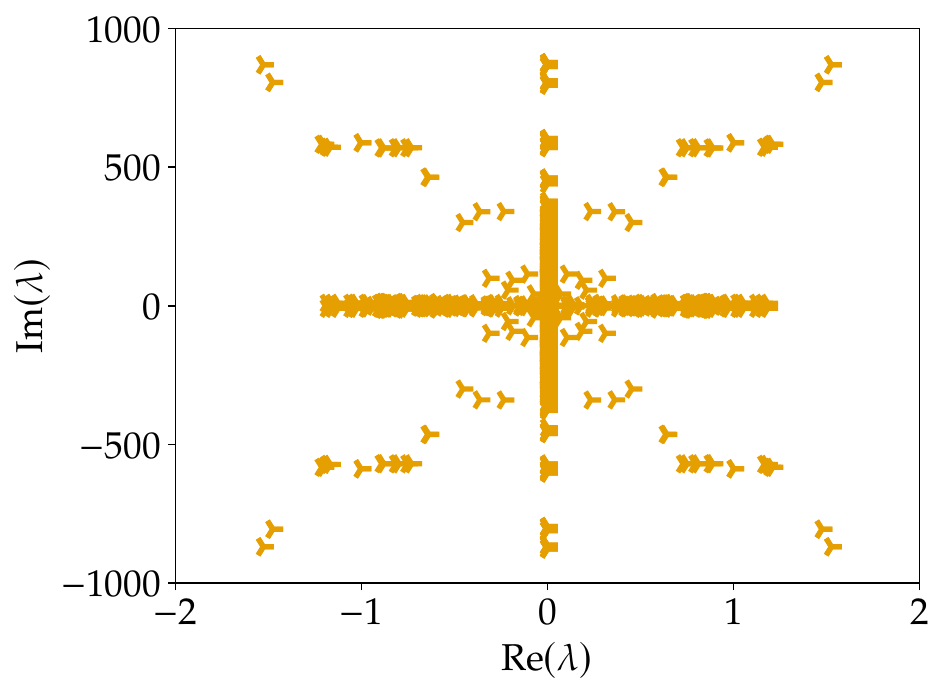}
    \caption{KEP and PEP flux of \cite{shima2021preventing}.}
  \end{subfigure}%
  \hspace*{\fill}
  \begin{subfigure}{0.33\textwidth}
  \centering
    \includegraphics[width=\textwidth]{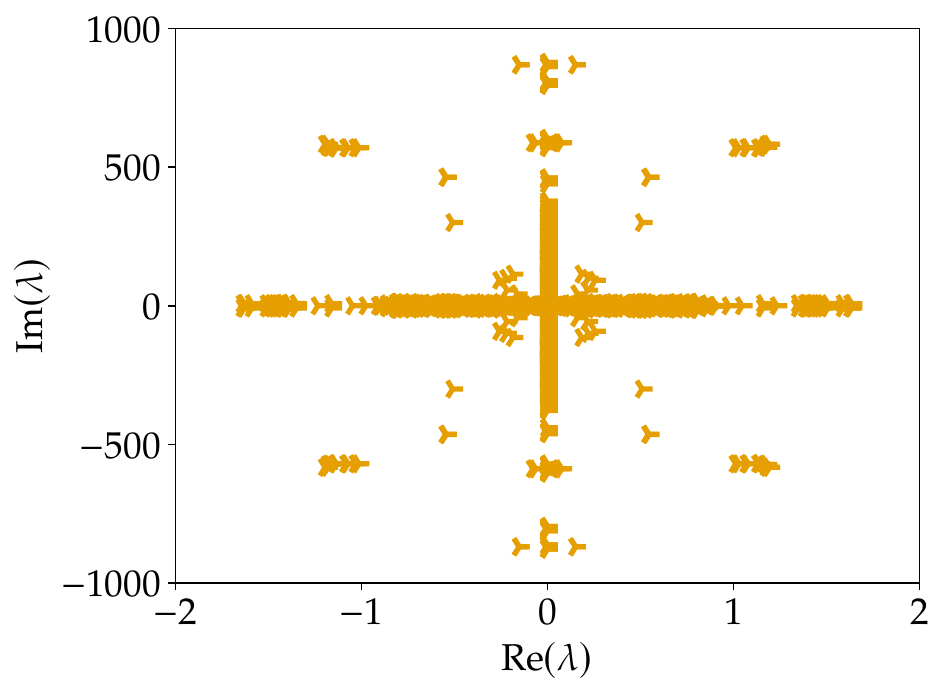}
    \caption{EC, KEP, and PEP flux of \cite{ranocha2018thesis,ranocha2020entropy}.}
  \end{subfigure}%
  \caption{Spectra of split-form DG semi-discretizations of the compressible
           Euler equations in two space dimensions with linearization state 
           \eqref{eq:Euler-density-wave}. The DG methods use polynomials of
           degree $N = 5$ and a uniform grid of $4 \times 4$ elements in the
           domain $[-1, 1]^2$ with periodic boundary conditions.
           The same numerical flux is used for both the volume terms
           and at surfaces.}
  \label{fig:spectra-Euler}
\end{figure}

As expected, the central flux results in a spectra that is almost purely
imaginary, with only small deviations of the eigenvalues from the
imaginary axis
that are within machine accuracy
when considering the approximation of the Jacobian via finite differences
and the conditioning of the associated eigenvector matrix for this problem.

\begin{remark}
  We would like to stress that eigenvalues of nonlinear right-hand sides in an
  ODE $u'(t) = g(u(t))$ do not necessarily predict the global behavior of solutions.
  For example, an energy-conserving ODE with purely positive and negative
  eigenvalues is discussed in \cite{ranocha2020strong,ranocha2020energy}.
  However, eigenvalues of the linearized Jacobian predict the local behavior
  of the solution, \eg the temporal development of initial perturbations.
  In fluid dynamics, it is well known that there are many flow states that are
  physically unstable, \ie flow states such as shear layers where added initial
  perturbations grow exponentially in time, until they start to behave non-linearly
  and transition to turbulence. In the considered case of density propagation
  discussed in this section,
  we do not expect to find significant
  signs of exponential growth, as also indicated by the almost
  imaginary spectrum obtained for the central flux. In particular, the compressible
  Euler equations are reduced to linear advection equations and hence this
  particular flow state is physically stable with respect to density perturbations.
\end{remark}

As anticipated following the numerical investigations in Section~\ref{sec:num_inv},
the EC, KEP, and PEP flux \eqref{eq:ec-kep-pep-fnumrho} of Ranocha yields
eigenvalues with clearly positive real parts of order unity, which do not
vanish under grid refinement, but shift to higher imaginary values (see
\cite{gassner2020stability} for a more detailed discussion on this effect).
This underlines our conclusion to research question (RQ2), that the PEP property
does not fix the local linear stability issue of the EC split-form DG scheme.

Surprisingly, the spectrum with the KEP and PEP flux \eqref{eq:shima_etal}
of Shima \etal \cite{shima2021preventing} shows similar issues --- it
clearly has eigenvalues with positive real parts. \emph{This
discretization is not locally linearly stable neither}. However, as
observed in Section~\ref{sec:num_inv}, the discretization could
robustly handle the density wave example (we made sure to test
very long times $t > 100$ as well).

It is important to point out that the density wave example is a specific
test case particular well suited to the Shima \etal powered
discretization where it works perfectly fine,
as it preserves the pressure
and velocity as constants down to machine precision and hence reduces to the
central scheme in this particular case.
The spectrum clearly shows that adding just a small perturbation to this state may
lead to spurious exponential growth --- hence it is not locally linearly
stable. The spectrum with the central flux is (almost) purely imaginary
and has no growth of any modes. Constant pressure and velocity
within machine precision accuracy cause very small perturbations in
the range of $10^{-15}$. Hence, it would take a really (really) long
simulation run time until these machine accuracy fluctuations grow.
Furthermore, at this very small perturbation scale, the artificial
dissipation of the time integration is effective and the $\mathtt{cfl}$
would have to be drastically reduced.

We thus need a feasible setup to further assess the robustness of the
DG split-form with the Shima \etal flux: we make a simulation that
investigates the growth of medium scale perturbations added to the
initial conditions (see \cite{gassner2020stability} for additional
details). We start with the same setup as above and compute the
eigenvector $\tilde u_0$ associated with the biggest real eigenvalue
of the semi-discretization using the numerical flux
\eqref{eq:shima_etal} of Shima \etal \cite{shima2021preventing}.
The numerically computed
eigenvector is purely real-valued. Note that this eigenvector can
always be chosen to be real-valued since the Jacobian and its
corresponding eigenvalue are both real-valued.
We normalize the eigenvector such that $\| \tilde u_0 \|_\infty = 1$
and use the perturbed initial condition $u_0 + 10^{-3} \tilde u_0$,
where $u_0$ is the original initial condition given by \eqref{eq:Euler-density-wave}.
Thus, the perturbation scale is now $10^{-3}$, instead of $10^{-15}$.
Further, the shape of the perturbation corresponds to the eigenmode
of the spectrum. Thus, we are able to compare the growth of the
fluctuations from the simulation, to the one predicted by the spectra
using the real part of the eigenvalue as the growth rate.

To get the evolution of the perturbation, we subtract in each Runge-Kutta
stage the semi-discretization applied to the unperturbed initial
condition from the resulting semi-discretization of the perturbed
initial state. We perform these numerical  experiments using the flux
\eqref{eq:shima_etal} of Shima \etal \cite{shima2021preventing}
as surface flux for the DG scheme, and in addition also using the
dissipative HLL flux \cite{harten1983upstream} as surface flux, while
both discretizations use the Shima \etal flux for the split-form volume
integral.

\begin{figure}[htp]
\centering
  \begin{subfigure}{0.49\textwidth}
  \centering
    \includegraphics[width=\textwidth]{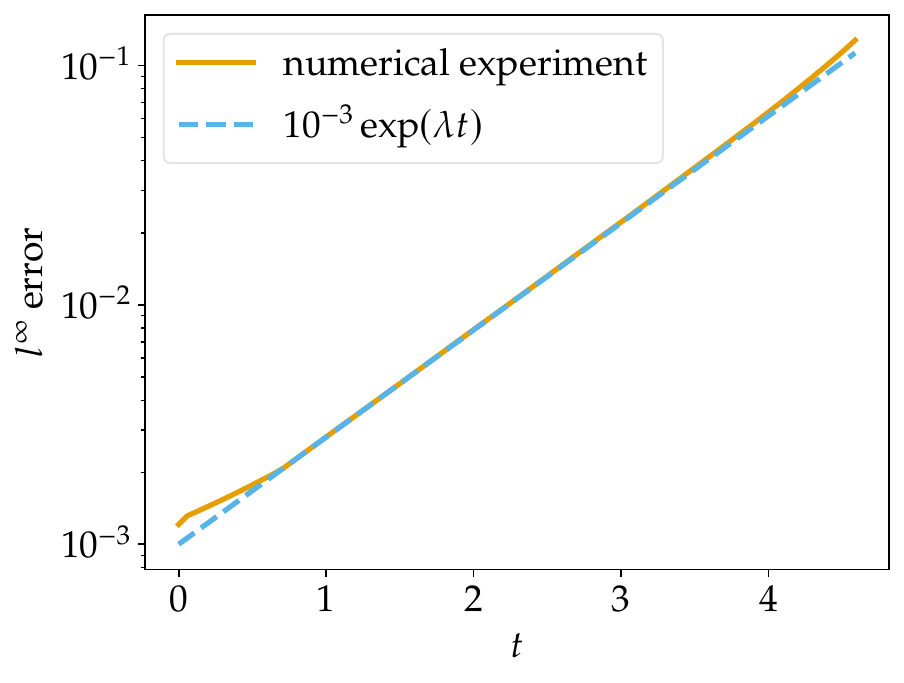}
    \caption{Shima \etal \eqref{eq:shima_etal} surface flux,
             $\lambda \approx 1.03$.}
  \end{subfigure}%
  \hspace*{\fill}
  \begin{subfigure}{0.49\textwidth}
  \centering
    \includegraphics[width=\textwidth]{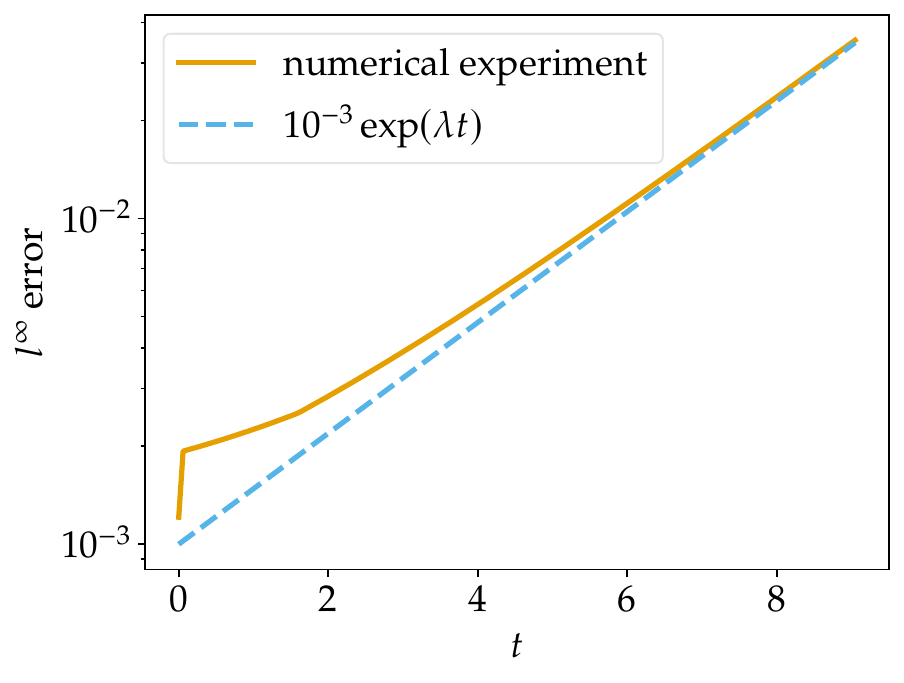}
    \caption{HLL surface flux,
             $\lambda \approx 0.39$.}
  \end{subfigure}%
  \caption{Evolution of an eigenvector perturbation of the initial condition
           \eqref{eq:Euler-density-wave}. The DG methods use polynomials of
           degree $N = 5$ and a uniform grid of $4 \times 4$ elements in the
           domain $[-1, 1]^2$ with periodic boundary conditions and the
           numerical flux of Shima \etal \cite{shima2021preventing}.}
  \label{fig:Euler-fluctuations}
\end{figure}

The resulting discrete $l^\infty$ error of the perturbations in the
conserved variables is visualized
in Figure~\ref{fig:Euler-fluctuations}.
Clearly, the fluctuations grow exponentially with a rate perfectly matching that of the real part of the eigenvalue.
The simulation terminates at $t \approx 4.6$ because of negative densities
for the case without surface dissipation and at $t \approx 9$ if the HLL flux is used.
Clearly, the dissipative HLL flux reduces the growth of the fluctuations but only
quantitatively, not qualitatively. Surface dissipation can not guarantee to control errors stemming from badly discretized split-form volume integrals. Consequently, in this case, the error still spuriously grows exponentially in time and finally results in unphysical solutions, which underlines the local stability issues of the Shima \etal flux and hence confirms the statement in \cite{gassner2020stability}, that many split-form discretizations have these problems.

\begin{figure}[htp]
\centering
  \begin{subfigure}{0.49\textwidth}
  \centering
    \includegraphics[width=\textwidth]{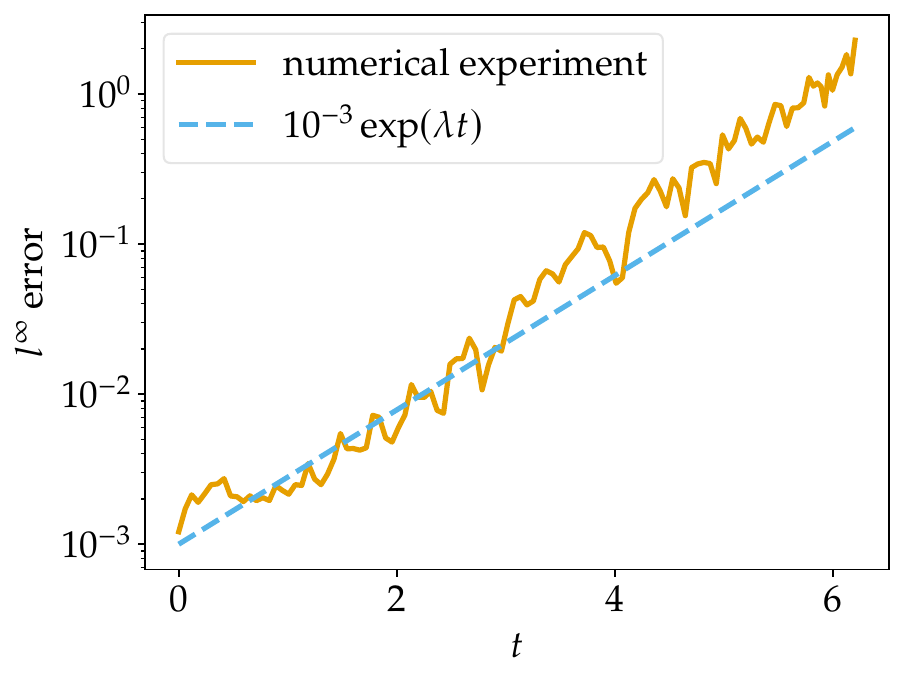}
    \caption{Shima \etal \eqref{eq:shima_etal} surface flux,
             $\lambda \approx 1.03$.}
  \end{subfigure}%
  \hspace*{\fill}
  \begin{subfigure}{0.49\textwidth}
  \centering
    \includegraphics[width=\textwidth]{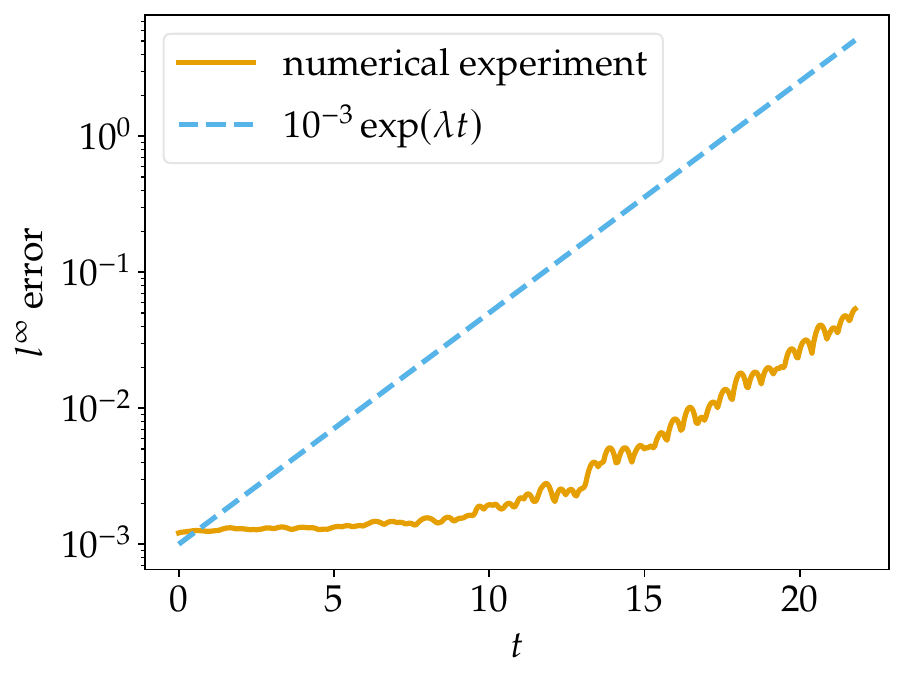}
    \caption{HLL surface flux,
             $\lambda \approx 0.39$.}
  \end{subfigure}%
  \caption{Evolution of a random perturbation of the initial condition
           \eqref{eq:Euler-density-wave}. The DG methods use polynomials of
           degree $N = 5$ and a uniform grid of $4 \times 4$ elements in the
           domain $[-1, 1]^2$ with periodic boundary conditions and the
           numerical flux of Shima \etal \cite{shima2021preventing}.}
  \label{fig:Euler-fluctuations-random}
\end{figure}

We also considered a random perturbation of the initial condition, where
each conserved variable is perturbed randomly at each point with a uniform
distribution that is symmetric around zero. Such an approach is also used to
estimate the Lyapunov exponent of a dynamical system \cite{vasil2016validated}.
The resulting errors of the fluctuation
simulation are visualized in Figure~\ref{fig:Euler-fluctuations-random}. Without
adding dissipation, the error grows approximately exponentially with a rate governed
by the maximal real eigenvalue. When surface dissipation in form of an HLL flux
is added, the perturbation grows slowly at first but shows the same exponential
growth governed by the largest real eigenvalue later.

\subsection{EC and local linear stability}

Combining the results from our numerical investigations with Corollary~\ref{cor:EC-KEP-PEP-arithmetic} or
Corollary~\ref{cor:EC-arithmetic} gives at least a partial answer to our third research question \ref{RQ3}: There are no Harten entropies for the compressible Euler equations such that the associated EC two-point fluxes result in locally linearly stable schemes.

\section{Summary and discussion}
\label{sec:summary}

In this paper, we investigate the answers to the three research questions
\begin{enumerate}[label=(RQ\arabic*), labelwidth=\widthof{\ref{RQ3}}, leftmargin=!]
  \item
  Are there EC two-point fluxes for the compressible Euler
  equations with the KEP and PEP property?
  \item
  Does the PEP property remedy the local linear stability issues of high-order
  split-form DG schemes?
  \item
  Are there entropies, such that the EC two-point fluxes and corresponding EC volume integral terms are
  locally linearly stable?
\end{enumerate}

We first stress and clarify, as discussed in detail in \cite{gassner2020stability},
the final goal is not to construct a discretization that is entropy-conservative.
However, the entropy-conservative volume terms in the high-order split-form DG discretization are the key to achieve provably discrete entropy-dissipation. And while it is possible (and common) to
introduce dissipation through the surface integrals by proper choice of
entropy-dissipative/stable two-point fluxes as surface fluxes, it was also observed here and in
\cite{gassner2020stability} that surface type dissipation alone is not enough to fix the issue stemming from the volume terms. Consequently, the answers that we found for our research questions not only underline the worrisome findings in \cite{gassner2020stability}, but even strengthen them.

Collecting the answers to our research questions, we found in Section~\ref{sec:fluxes} that the answer to the first research question \ref{RQ1} is \emph{yes}. It turns out that the flux developed in
\cite{ranocha2018thesis,ranocha2020entropy} is indeed EC, KEP, and PEP. It is interesting to note, that this is also the only available choice for the compressible Euler equations with ideal gas law.

Unfortunately, we could show that the answer to the research question \ref{RQ2} is \emph{no},
as discussed in Section~\ref{sec:stability}. The additional property of PEP does
not fix the stability issue for the simple density wave propagation when using the EC scheme. We
demonstrated that the issue is the logarithmic mean of the density, which is necessary
in the density flux. This is in contrast to the two-point flux proposed by Shima \etal
\cite{shima2021preventing}, which is KEP and PEP and can robustly run the
density wave example as it uses the arithmetic mean of density in the density flux. However, this discretization is again not locally linearly stable as shown by computing the spectra and performing simulations to analyze the growth of perturbations.

The worrisome answer to the third research question \ref{RQ3} is \emph{no}, at least if we consider the family of
entropies introduced by Harten. We could prove that it is not possible to find a Harten entropy for the
compressible Euler equation, such that the density flux is based on the
arithmetic mean of the density. Our investigations of the linear advection equation clearly show that without arithmetic mean, the discretizations are not locally linearly stable. Thus, all corresponding EC split-form schemes for the compressible Euler equation will have local linear stability issues for the simple density wave propagation.

\appendix

\section{The PEP property for high-order schemes}
\label{sec:appendix1}

Extending Lemma~\ref{lem:pep-semidiscrete}, the PEP property \eqref{eq:pep}
of a numerical flux $\fnum$ extends directly to a high-order semi-discretization
\eqref{eq:cl-1D-SBP}.
\begin{lemma}
\label{lem:pep-discrete}
  A pressure equilibrium $p \equiv \const, v \equiv \const$ is preserved by
  any general linear method applied to the semi-discretization \eqref{eq:cl-1D-SBP}
  if the numerical flux $\fnum$ is PEP.
\end{lemma}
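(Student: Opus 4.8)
The plan is to establish the infinitesimal statement at the semi-discrete level and then lift it to an arbitrary time integrator. For the first part I would redo the computation of Lemma~\ref{lem:pep-semidiscrete} node by node, replacing the finite-volume difference quotient by $\sum_l 2 D_{i,l}(\cdot)$. Starting from a state with $v \equiv \const$ and $p \equiv \const$,
\begin{equation*}
  \rho_i \partial_t v_i
  = \partial_t (\rho v)_i - v\, \partial_t \rho_i
  = - \sum_l 2 D_{i,l} \bigl( \fnum_{\rho v}(u_i, u_l) - v\, \fnum_{\rho}(u_i, u_l) \bigr) .
\end{equation*}
Since every pair $u_i, u_l$ carries the same constant $v$ and $p$, the PEP property \eqref{eq:pep} makes each summand equal to the same constant $\const(p, v)$, independent of $l$, so the right-hand side collapses to $\const(p, v) \sum_l 2 D_{i,l}$. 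A consistent SBP derivative operator annihilates constants, $\sum_l D_{i,l} = 0$, so this vanishes and $\partial_t v_i = 0$. Substituting $\partial_t v_i = 0$ into $\tfrac{1}{\gamma - 1} \partial_t p_i = \partial_t (\rho e)_i - \tfrac12 v^2 \partial_t \rho_i$ and invoking \eqref{eq:pep} together with $\sum_l D_{i,l} = 0$ once more yields $\partial_t p_i = 0$.

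I would then recast this as an affine-invariance property. For fixed values of the constant velocity $v$ and pressure $p$, the set $\mathcal{A}$ of nodal states in pressure equilibrium with those values is an affine subspace of the conserved-variable space, because on $\mathcal{A}$ the quantities $(\rho v)_i = v \rho_i$ and $(\rho e)_i = \tfrac12 v^2 \rho_i + p/(\gamma - 1)$ are affine functions of $\rho_i$. The first part says precisely that the semi-discrete right-hand side $g$ maps $\mathcal{A}$ into its direction space $T\mathcal{A}$, namely that the components $\partial_t (\rho v)_i - v\, \partial_t \rho_i$ and $\partial_t (\rho e)_i - \tfrac12 v^2 \partial_t \rho_i$ of $g$ vanish on $\mathcal{A}$; in particular the semi-discrete flow itself preserves $\mathcal{A}$ by the standard existence-uniqueness theorem for ODEs.

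It remains to recall that a consistent (hence preconsistent) general linear method preserves such an affine invariant manifold. In stage form, each internal stage value and each output value is an affine combination of the incoming approximations --- which lie in $\mathcal{A}$ by the induction hypothesis, preconsistency being exactly what makes the combination affine --- plus a linear combination of evaluations of $g$ at the stage values; once the stage values are known to lie in $\mathcal{A}$, those evaluations lie in $T\mathcal{A}$ by the first part, so the combination, and therefore the output, stays in $\mathcal{A}$, and induction over the time steps concludes the proof. The only genuinely delicate point I expect is the implicit case: one first has to know that the stage system admits a solution inside $\mathcal{A}$ with positive densities (so that the PEP identity is applicable along the computation), which follows by restricting the stage equations to the density components, solving there by the usual contraction argument for sufficiently small step size, and appealing to uniqueness of the stage solution.
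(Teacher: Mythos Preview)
Your proposal is correct and matches the paper's approach: both recognize that, for fixed $v$ and $p$, the pressure-equilibrium states form an affine subspace in the conserved variables (parametrized by $\rho$), verify via the PEP identities together with $\sum_l D_{i,l}=0$ that the semi-discrete right-hand side maps this subspace into its direction space, and then note that a general linear method is assembled from affine combinations of states plus scaled right-hand-side evaluations, both of which preserve such a subspace. Your phrasing via affine invariants and your explicit handling of the implicit stage equations are a bit more abstract than the paper's direct conserved-variable computation, but the content is the same.
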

\begin{proof}
  It suffices to consider linear combinations of numerical solutions as well as
  the addition of the semidiscrete operator to a numerical solution.
  Linear combinations preserve a pressure equilibrium, since
  $\rho$, $\rho v$, and $\rho e = \rho v^2 / 2 + p / (\gamma - 1)$
  are linear in $\rho$.
  Given $\alpha \in \R$, the scaled addition of the semidiscrete operator to
  a solution preserves the pressure equilibrium, since
  \begin{equation}
  \begin{aligned}
    \rho &\colon
    & \rho_i + \alpha \sum_l D_{i,l} \fnum_{\rho}(u_i, u_l),
    \\
    \rho v &\colon
    & (\rho v)_i + \alpha \sum_l D_{i,l} \fnum_{\rho v}(u_i, u_l)
    &=
    \rho_i v + \alpha \sum_l D_{i,l} \bigl( v \fnum_{\rho}(u_i, u_l) + \const(p, v) \bigr)
    \\&&
    &=
    \biggl( \rho_i + \alpha \sum_l D_{i,l} \fnum_{\rho}(u_i, u_l) \biggr) v,
    \\
    \rho e &\colon
    & (\rho e)_i + \alpha \sum_l D_{i,l} \fnum_{\rho e}(u_i, u_l)
    &=
    \frac{1}{2} \rho_i v^2 + \frac{1}{\gamma - 1} p +
    \alpha \sum_l D_{i,l} \biggl( \frac{1}{2} v^2 \fnum_{\rho}(u_i, u_l) + \const(p, v) \biggr)
    \\&&
    &=
    \frac{1}{2} \biggl( \rho_i + \alpha \sum_l D_{i,l} \fnum_{\rho}(u_i, u_l) \biggr) v^2 + \frac{1}{\gamma - 1} p.
  \end{aligned}
  \end{equation}
  Here, we used $\sum_l D_{i,l} = 0$, which is a necessary condition for a
  consistent derivative operator $D$.
\end{proof}

\section{Stability investigation of alternative mean values}
\label{sec:appendix2}

The logarithmic mean value is not the only mean value that is problematic for the stability. To demonstrate this, we show spectra of second-order central
finite difference methods of the form \eqref{eq:cl-1D-SBP}, where the numerical
flux is chosen as any of the different mean values studied in \cite{chen2005means},
namely
\begin{itemize}
  \item
  the centroidal mean $\fnum(u_-, u_+) = 2 (u_-^2 + u_- u_+ + u_+^2) / 3 (u_- + u_+)$,

  \item
  the arithmetic mean $\fnum(u_-, u_+) = (u_- + u_+) / 2$,

  \item
  the Heronian mean $\fnum(u_-, u_+) = (u_- + \sqrt{u_- u_+} + u_+) / 3$,

  \item
  the logarithmic mean $\fnum(u_-, u_+) = (u_+ - u_-) / (\log u_+ - \log u_-)$,

  \item
  the geometric mean $\fnum(u_-, u_+) = \sqrt{u_- u_+}$,

  \item
  and the harmonic mean $\fnum(u_-, u_+) = 2 u_- u_+ / (u_- + u_+)$.
\end{itemize}
This list is ordered in descending order of the size of the mean values
\cite{chen2005means}.

The resulting spectra are shown in Figure~\ref{fig:spectra-fluxes}. Clearly,
all mean values except the arithmetic mean result in eigenvalues with positive
real parts. The size of the maximal real part of the spectrum increases for
mean values that deviate more from the arithmetic mean value.

\begin{figure}[!tp]
\centering
  \begin{subfigure}{0.33\textwidth}
  \centering
    \includegraphics[width=\textwidth]{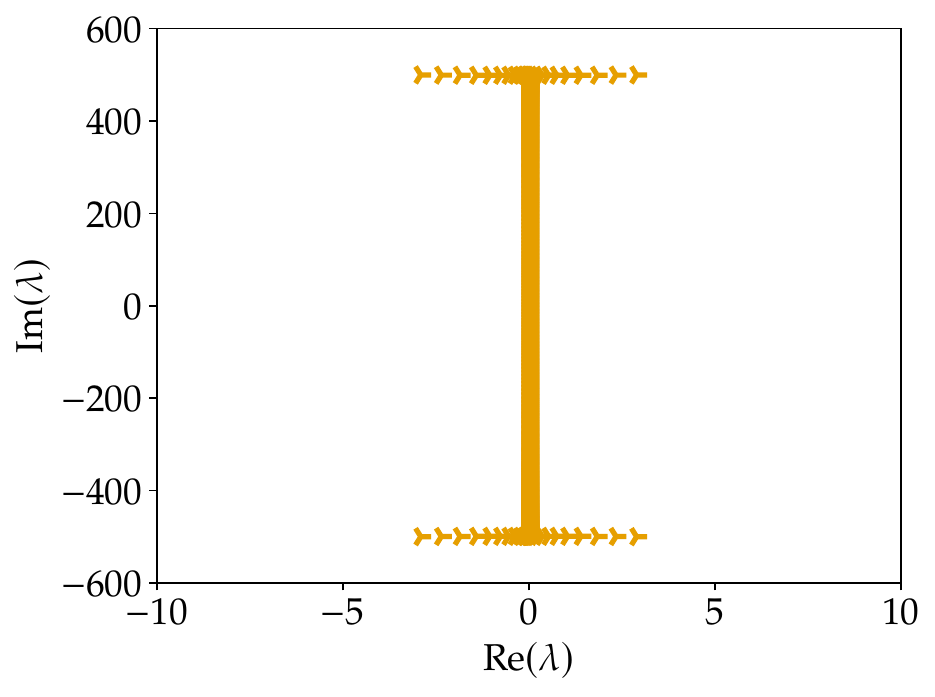}
    \caption{Centroidal mean.}
  \end{subfigure}%
  \hspace*{\fill}
  \begin{subfigure}{0.33\textwidth}
  \centering
    \includegraphics[width=\textwidth]{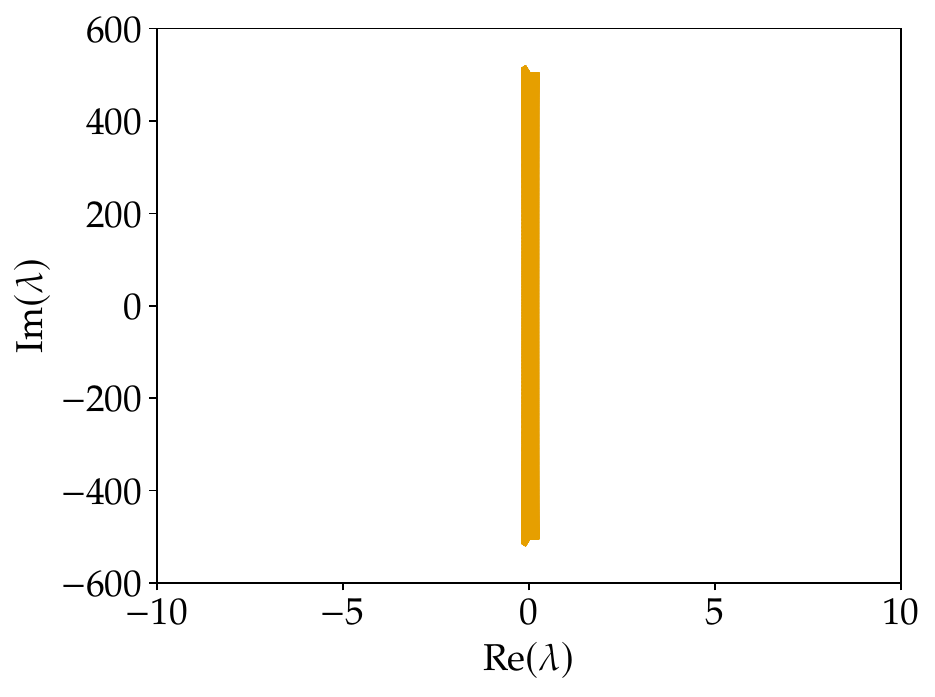}
    \caption{Arithmetic mean.}
  \end{subfigure}%
  \hspace*{\fill}
  \begin{subfigure}{0.33\textwidth}
  \centering
    \includegraphics[width=\textwidth]{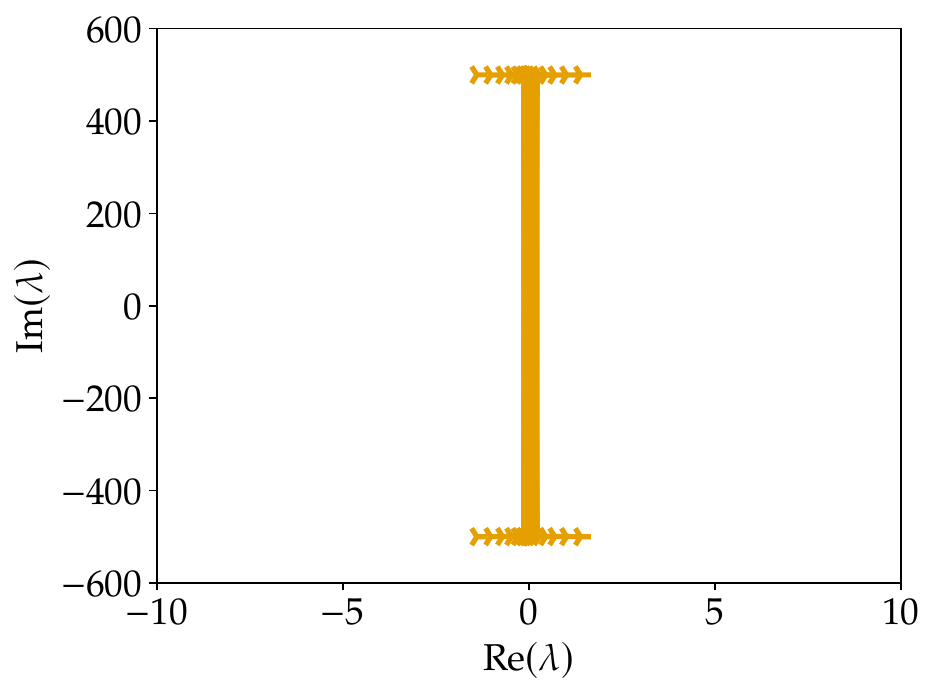}
    \caption{Heronian mean.}
  \end{subfigure}%
  \\
  \begin{subfigure}{0.33\textwidth}
  \centering
    \includegraphics[width=\textwidth]{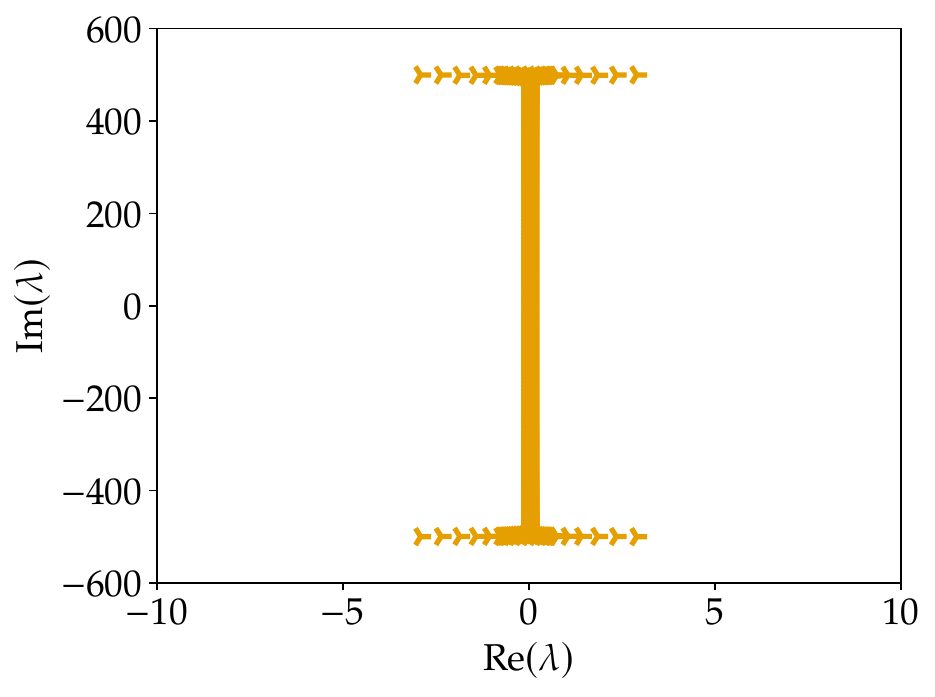}
    \caption{Logarithmic mean.}
  \end{subfigure}%
  \hspace*{\fill}
  \begin{subfigure}{0.33\textwidth}
  \centering
    \includegraphics[width=\textwidth]{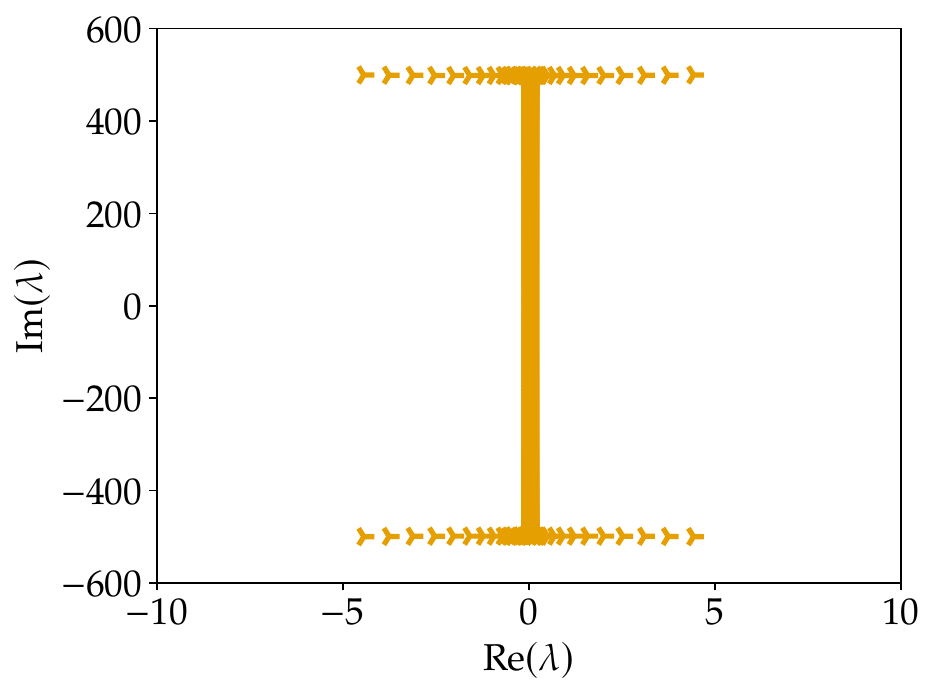}
    \caption{Geometric mean.}
  \end{subfigure}%
  \hspace*{\fill}
  \begin{subfigure}{0.33\textwidth}
  \centering
    \includegraphics[width=\textwidth]{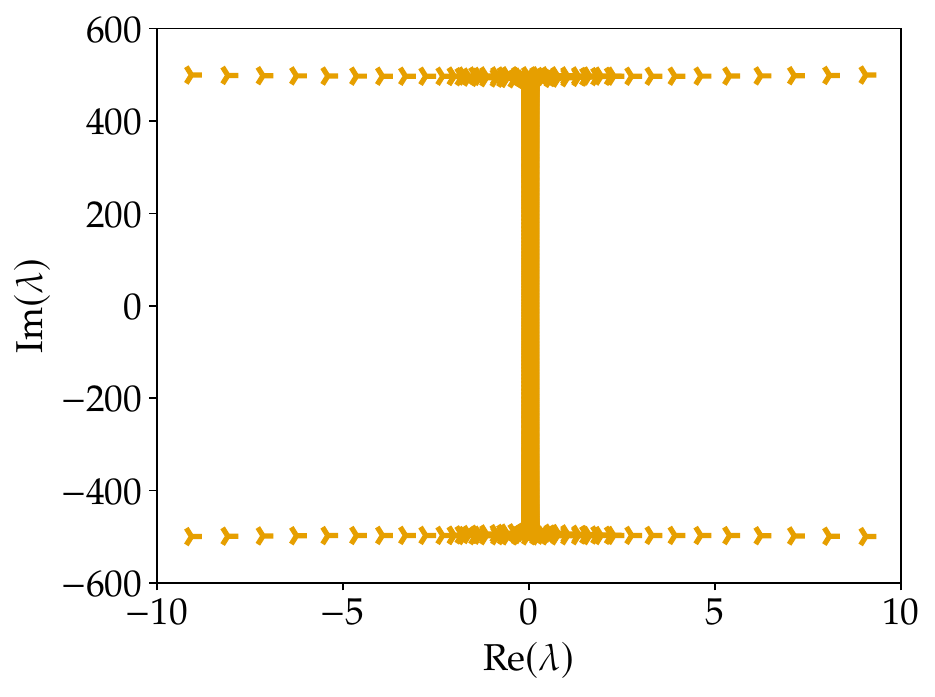}
    \caption{Harmonic mean.}
  \end{subfigure}%
  \caption{Spectra of nonlinear semi-discretizations \eqref{eq:cl-1D-SBP}
           using second-order central finite difference methods and different
           mean values as numerical flux.}
  \label{fig:spectra-fluxes}
\end{figure}

\section*{Acknowledgments}

Research reported in this publication was supported by the
King Abdullah University of Science and Technology (KAUST).
Funded by the Deutsche Forschungsgemeinschaft (DFG, German Research Foundation)
under Germany's Excellence Strategy EXC 2044-390685587, Mathematics Münster:
Dynamics-Geometry-Structure.
Gregor Gassner is supported by the European Research Council (ERC) under the
European Union's Eights Framework Program Horizon 2020 with the research project
Extreme, ERC grant agreement no. 714487.

\printbibliography

\end{document}